\titleformat{\section}{\centering\large\sc}{\thesection.}{0.7em}{}
\newcommand{\sortas}[1]{}
\begin{document}
\theoremstyle{plain}
\newtheorem{thm}{Theorem}[section]
\newtheorem{theorem}[thm]{Theorem}

\numberwithin{equation}{thm}
\newtheorem*{thm*}{Theorem}
\newtheorem*{cor*}{Corollary}
\newtheorem*{thma}{Theorem A}
\newtheorem*{thmb}{Theorem B}
\newtheorem*{mthm}{Main Theorem}
\newtheorem*{mcor}{Theorem \ref{maincor}}
\newtheorem{thms}[thm]{Theorems}
\newtheorem{prop}[thm]{Proposition}
\newtheorem{proposition}[thm]{Proposition}
\newtheorem{prodef}[thm]{Proposition and Definition}
\newtheorem{lemma}[thm]{Lemma}
\newtheorem{lem}[thm]{Lemma}
\newtheorem{cor}[thm]{Corollary}

\newtheorem{corollary}[thm]{Corollary}
\newtheorem{claim}[thm]{Claim}
\newtheorem*{claim*}{Claim}

\theoremstyle{definition}
\newtheorem{defn}[thm]{Definition}
\newtheorem{defnProp}[thm]{Definition and Proposition}
\newtheorem{prob}[thm]{Problem}
\newtheorem{question}[thm]{Question}
\newtheorem{definition}[thm]{Definition}
\newtheorem{ex}[thm]{Example}
\newtheorem{exercise}[thm]{Exercise}
\newtheorem{Example}[thm]{Example}
\newtheorem*{notation}{Notation}
\newtheorem*{acknowledgements}{Acknowledgements}
\newtheorem{example}[thm]{Example}
\newtheorem{fact}[thm]{Fact}
\newtheorem*{Fact*}{Fact}
\newtheorem{conj}[thm]{Conjecture}
\newtheorem{ques}[thm]{Question}
\newtheorem*{quesa}{Question A}
\newtheorem*{quesb}{Question B}
\newtheorem{case}{Case}
\newtheorem{setting}[thm]{Setting}
\newtheorem{rem}[thm]{Remark}
\newtheorem{note}[thm]{Note}
\newtheorem{remark}[thm]{Remark}

\theoremstyle{remark}
\newtheorem*{lemm}{Lemma}
\newtheorem*{pf}{{\sl Proof}}
\newtheorem*{pfpr}{Proof of Proposition \eqref{positive Hecke}}
\newtheorem*{tpf}{{\sl Proof of Theorem 1.1}}
\newtheorem*{cpf1}{{\sl Proof of Claim 1}}
\newtheorem*{cpf2}{{\sl Proof of Claim 2}}

\def\soc{\operatorname{Soc}}
\def\xx{\text{{\boldmath$x$}}}
\def\L{\mathrm{U}}
\def\Z{\mathcal{Z}}
\def\D{\mathcal{D}}
\def\Min{\operatorname{Min}}
\def\Gdim{\operatorname{Gdim}}
\def\pd{\operatorname{pd}}
\def\GCD{\operatorname{GCD}}
\def\Ext{\operatorname{Ext}}
\def\X{\mathcal{X}}
\def\S{\mathcal{S}}
\def\XX{\mathbb{X}}
\def\Im{\operatorname{Im}}
\def\s{\operatorname{s}}
\def\Ker{\operatorname{Ker}}
\def\KK{\mathbb{K}}
\def\bbZ{\mathbb{Z}}
\def\LL{\mathbb{U}}
\def\E{\operatorname{E}}
\def\H{\operatorname{H}}

\def\Hom{\operatorname{Hom}}
\def\Ext{\operatorname{Ext}}
\def\RHom{\mathrm{{\bf R}Hom}}
\def\Tor{\operatorname{Tor}}
\def\Max{\operatorname{Max}}
\def\Assh{\operatorname{Assh}}
\def\End{\mathrm{End}}
\def\rad{\mathrm{rad}}
\def\Soc{\mathrm{Soc}}
\def\Proj{\operatorname{Proj}}

\def\Mod{\mathrm{Mod}}
\def\mod{\mathrm{mod}}
\def\G{{\sf G}}
\def\T{{\sf T}}

\def\Coker{\mathrm{Coker}}
\def\im{\mathrm{im}\;}
\def\st{{}^{\ast}}
\def\rank{\mathrm{rank}}
\def\a{\mathfrak a}
\def\b{\mathfrak b}
\def\c{\mathfrak c}
\def\e{\mathrm{e}}
\def\f{\mathfrak{f}}
\def\m{\mathfrak m}
\def\n{\mathfrak n}
\def\p{\mathfrak p}
\def\q{\mathfrak q}
\def\P{\mathfrak P}
\def\Q{\mathfrak Q}
\def\N{\Bbb N}
\def\C{\mathcal{C}}
\def\K{\mathrm{K}}
\def\H{\mathrm{H}}
\def\J{\mathrm{J}}
\def\Gr{\mathrm G}
\def\FC{\mathrm F}
\def\Var{\mathrm V}
\def\V{\mathrm V}
\def\r{\mathrm{r}}

\newcommand{\nCM}{\mathrm{nCM}}
\newcommand{\nSCM}{\mathrm{nSCM}}
\newcommand{\Aut}{\mathrm{Aut}}
\newcommand{\Att}{\mathrm{Att}}
\newcommand{\Ann}{\mathrm{Ann}}

\newcommand{\rma}{\mathrm{a}}
\newcommand{\rmb}{\mathrm{b}}
\newcommand{\rmc}{\mathrm{c}}
\newcommand{\rmd}{\mathrm{d}}
\newcommand{\rme}{\mathrm{e}}
\newcommand{\rmf}{\mathrm{f}}
\newcommand{\rmg}{\mathrm{g}}
\newcommand{\rmh}{\mathrm{h}}
\newcommand{\rmi}{\mathrm{i}}
\newcommand{\rmj}{\mathrm{j}}
\newcommand{\rmk}{\mathrm{k}}
\newcommand{\rml}{\mathrm{l}}
\newcommand{\rmm}{\mathrm{m}}
\newcommand{\rmn}{\mathrm{n}}
\newcommand{\rmo}{\mathrm{o}}
\newcommand{\rmp}{\mathrm{p}}
\newcommand{\rmq}{\mathrm{q}}
\newcommand{\rmr}{\mathrm{r}}
\newcommand{\rms}{\mathrm{s}}
\newcommand{\rmt}{\mathrm{t}}
\newcommand{\rmu}{\mathrm{u}}
\newcommand{\rmv}{\mathrm{v}}
\newcommand{\rmw}{\mathrm{w}}
\newcommand{\rmx}{\mathrm{x}}
\newcommand{\rmy}{\mathrm{y}}
\newcommand{\rmz}{\mathrm{z}}

\newcommand{\rmA}{\mathrm{A}}
\newcommand{\rmB}{\mathrm{B}}
\newcommand{\rmC}{\mathrm{C}}
\newcommand{\rmD}{\mathrm{D}}
\newcommand{\rmE}{\mathrm{E}}
\newcommand{\rmF}{\mathrm{F}}
\newcommand{\rmG}{\mathrm{G}}
\newcommand{\rmH}{\mathrm{H}}
\newcommand{\rmI}{\mathrm{I}}
\newcommand{\rmJ}{\mathrm{J}}
\newcommand{\rmK}{\mathrm{K}}
\newcommand{\rmL}{\mathrm{L}}
\newcommand{\rmM}{\mathrm{M}}
\newcommand{\rmN}{\mathrm{N}}
\newcommand{\rmO}{\mathrm{O}}
\newcommand{\rmP}{\mathrm{P}}
\newcommand{\rmQ}{\mathrm{Q}}
\newcommand{\rmR}{\mathrm{R}}
\newcommand{\rmS}{\mathrm{S}}
\newcommand{\rmT}{\mathrm{T}}
\newcommand{\rmU}{\mathrm{U}}
\newcommand{\rmV}{\mathrm{V}}
\newcommand{\rmW}{\mathrm{W}}
\newcommand{\rmX}{\mathrm{X}}
\newcommand{\rmY}{\mathrm{Y}}
\newcommand{\rmZ}{\mathrm{Z}}

\newcommand{\cala}{\mathcal{a}}
\newcommand{\calb}{\mathcal{b}}
\newcommand{\calc}{\mathcal{c}}
\newcommand{\cald}{\mathcal{d}}
\newcommand{\cale}{\mathcal{e}}
\newcommand{\calf}{\mathcal{f}}
\newcommand{\calg}{\mathcal{g}}
\newcommand{\calh}{\mathcal{h}}
\newcommand{\cali}{\mathcal{i}}
\newcommand{\calj}{\mathcal{j}}
\newcommand{\calk}{\mathcal{k}}
\newcommand{\call}{\mathcal{l}}
\newcommand{\calm}{\mathcal{m}}
\newcommand{\caln}{\mathcal{n}}
\newcommand{\calo}{\mathcal{o}}
\newcommand{\calp}{\mathcal{p}}
\newcommand{\calq}{\mathcal{q}}
\newcommand{\calr}{\mathcal{r}}
\newcommand{\cals}{\mathcal{s}}
\newcommand{\calt}{\mathcal{t}}
\newcommand{\calu}{\mathcal{u}}
\newcommand{\calv}{\mathcal{v}}
\newcommand{\calw}{\mathcal{w}}
\newcommand{\calx}{\mathcal{x}}
\newcommand{\caly}{\mathcal{y}}
\newcommand{\calz}{\mathcal{z}}

\newcommand{\calA}{\mathcal{A}}
\newcommand{\calB}{\mathcal{B}}
\newcommand{\calC}{\mathcal{C}}
\newcommand{\calD}{\mathcal{D}}
\newcommand{\calE}{\mathcal{E}}
\newcommand{\calF}{\mathcal{F}}
\newcommand{\calG}{\mathcal{G}}
\newcommand{\calH}{\mathcal{H}}
\newcommand{\calI}{\mathcal{I}}
\newcommand{\calJ}{\mathcal{J}}
\newcommand{\calK}{\mathcal{K}}
\newcommand{\calL}{\mathcal{L}}
\newcommand{\calM}{\mathcal{M}}
\newcommand{\calN}{\mathcal{N}}
\newcommand{\calO}{\mathcal{O}}
\newcommand{\calP}{\mathcal{P}}
\newcommand{\calQ}{\mathcal{Q}}
\newcommand{\calR}{\mathcal{R}}
\newcommand{\calS}{\mathcal{S}}
\newcommand{\calT}{\mathcal{T}}
\newcommand{\calU}{\mathcal{U}}
\newcommand{\calV}{\mathcal{V}}
\newcommand{\calW}{\mathcal{W}}
\newcommand{\calX}{\mathcal{X}}
\newcommand{\calY}{\mathcal{Y}}
\newcommand{\calZ}{\mathcal{Z}}

\newcommand{\fka}{\mathfrak{a}}
\newcommand{\fkb}{\mathfrak{b}}
\newcommand{\fkc}{\mathfrak{c}}
\newcommand{\fkd}{\mathfrak{d}}
\newcommand{\fke}{\mathfrak{e}}
\newcommand{\fkf}{\mathfrak{f}}
\newcommand{\fkg}{\mathfrak{g}}
\newcommand{\fkh}{\mathfrak{h}}
\newcommand{\fki}{\mathfrak{i}}
\newcommand{\fkj}{\mathfrak{j}}
\newcommand{\fkk}{\mathfrak{k}}
\newcommand{\fkl}{\mathfrak{l}}
\newcommand{\fkm}{\mathfrak{m}}
\newcommand{\fkn}{\mathfrak{n}}
\newcommand{\fko}{\mathfrak{o}}
\newcommand{\fkp}{\mathfrak{p}}
\newcommand{\fkq}{\mathfrak{q}}
\newcommand{\fkr}{\mathfrak{r}}
\newcommand{\fks}{\mathfrak{s}}
\newcommand{\fkt}{\mathfrak{t}}
\newcommand{\fku}{\mathfrak{u}}
\newcommand{\fkv}{\mathfrak{v}}
\newcommand{\fkw}{\mathfrak{w}}
\newcommand{\fkx}{\mathfrak{x}}
\newcommand{\fky}{\mathfrak{y}}
\newcommand{\fkz}{\mathfrak{z}}

\newcommand{\fkA}{\mathfrak{A}}
\newcommand{\fkB}{\mathfrak{B}}
\newcommand{\fkC}{\mathfrak{C}}
\newcommand{\fkD}{\mathfrak{D}}
\newcommand{\fkE}{\mathfrak{E}}
\newcommand{\fkF}{\mathfrak{F}}
\newcommand{\fkG}{\mathfrak{G}}
\newcommand{\fkH}{\mathfrak{H}}
\newcommand{\fkI}{\mathfrak{I}}
\newcommand{\fkJ}{\mathfrak{J}}
\newcommand{\fkK}{\mathfrak{K}}
\newcommand{\fkL}{\mathfrak{L}}
\newcommand{\fkM}{\mathfrak{M}}
\newcommand{\fkN}{\mathfrak{N}}
\newcommand{\fkO}{\mathfrak{O}}
\newcommand{\fkP}{\mathfrak{P}}
\newcommand{\fkQ}{\mathfrak{Q}}
\newcommand{\fkR}{\mathfrak{R}}
\newcommand{\fkS}{\mathfrak{S}}
\newcommand{\fkT}{\mathfrak{T}}
\newcommand{\fkU}{\mathfrak{U}}
\newcommand{\fkV}{\mathfrak{V}}
\newcommand{\fkW}{\mathfrak{W}}
\newcommand{\fkX}{\mathfrak{X}}
\newcommand{\fkY}{\mathfrak{Y}}
\newcommand{\fkZ}{\mathfrak{Z}}
\newcommand{\kr}{\mathrm{K}_R}
\newcommand{\mapright}[1]{%
\smash{\mathop{%
\hbox to 1cm{\rightarrowfill}}\limits^{#1}}}

\newcommand{\mapleft}[1]{%
\smash{\mathop{%
\hbox to 1cm{\leftarrowfill}}\limits_{#1}}}

\newcommand{\mapdown}[1]{\Big\downarrow
\llap{$\vcenter{\hbox{$\scriptstyle#1\,$}}$ }}
\renewcommand{\o}[1]{{#1}^{\circ}}

\newcommand{\mapup}[1]{\Big\uparrow
\rlap{$\vcenter{\hbox{$\scriptstyle#1\,$}}$ }}
\def\grade{\operatorname{grade}}
\def\depth{\operatorname{depth}}
\def\AGL{\operatorname{AGL}}
\def\Supp{\operatorname{Supp}}
\def\ann{\operatorname{Ann}}
\def\Ass{\operatorname{Ass}}
\def\height{\mathrm{ht}}
\def\Sp{\operatorname{Spec}}
\def\Spf{\operatorname{Spf}}
\def\Syz{\mathrm{Syz}}
\def\hdeg{\operatorname{hdeg}}
\def\id{\operatorname{id}}
\def\gr{\mbox{\rm gr}}
\def\Zdv{\operatorname{Zdv}}
\def\Wd{\operatorname{\mathrm{WD}}}
\def\r{\mathrm{r}}

\def\A{{\mathcal A}}
\def\B{{\mathcal B}}
\def\F{{\mathcal F}}
\def\Y{{\mathcal Y}}
\def\W{{\mathcal W}}
\def\M{{\mathcal M}}
\def\O{{\mathcal O}}
\def\H{{\mathcal H}}
\def\G{{\mathcal G}}
\def\R{{\mathcalR}}
\def\I{{\mathcal I}}
\def\J{{\mathcal J}}
\def\L{{\mathcal L}}
\def\U{{\mathcal U}}
\def\fM{\mathfrak M}
\def\fN{\mathfrak N}
\def\fl{\pi^\flat}
\def\E{{\mathcal E}}
\renewcommand{\R}{\mathcal{R}}

\def\yy{\text{\boldmath $y$}}
\def\rb{\overline{R}}
\def\ol{\overline}
\def\Deg{\mathrm{deg}}

\def\PP{\mathbb{P}}
\def\II{\mathbb{I}}
\newcommand{\w}{\wedge}
\newcommand{\dep}{\mathrm{depth}}
\newcommand{\kz}[1]{\mathrm{K}_{\bullet}(#1)}
\newcommand{\tc}[2]{\langle #1, #2 \rangle}
\newcommand{\hm}[2]{\operatorname{Hom}_{R}(#1, #2 )}
\newcommand{\pt}{\partial}
\newcommand{\ckz}[1]{\mathrm{K}_{\bullet}(#1)}
\newcommand{\loc}[1]{\mathrm{H}_{\m}^d({#1})}
\newcommand{\hy}[2]{\mathrm{H}_{\m}^{#1}(#2)}
\newcommand{\cy}[2]{\mathrm{H}_{#1}(#2)}
\newcommand{\lc}[3]{\mathrm{H}_{#1}^{#2}(#3)}
\newcommand{\exe}[2]{\operatorname{Ext}_{R}^{#1}(#2,\kr)}
\newcommand{\ext}[3]{\operatorname{Ext}_{R}^{#1}(#2, #3)}
\newcommand{\ie}[1]{\mathrm{E}_{R}(#1)}
\newcommand{\vin}{\rotatebox[origin=c]{90}{$\in$}}
\newcommand{\ts}{\otimes}
\newcommand{\as}[1]{\operatorname{Ass}_{R} #1}
\newcommand{\mcF}{\mathcal{F}}
\newcommand{\idd}[1]{\mathrm{id}_{R}({#1})}
\newcommand{\wrr}{\widehat{R}}
\newcommand{\mi}[1]{\mu^i(\m,{#1})}
\newcommand{\krr}[1]{\mathrm{K}_{#1}}
\newcommand{\rt}[1]{\mathrm{r}_R({#1})}
\newcommand{\ci}{\mathbb{C}_{i}(R)}
\newcommand{\ccr}[1]{\mathbb{C}_{#1}(R)}
\newcommand{\dg}{\textsuperscript{\textdagger}}
\newcommand{\rr}[2]{\mathrm{r}_{#1}(#2)}
\newcommand{\wm}{\widehat{\m}}
\newcommand{\wE}{\E_{\wrr}(\wrr/\wm)}
\renewcommand{\sp}{\mathrm{Spec}_S(\mathcal{A})}
\newcommand{\proj}{\mathrm{Proj}}

\newcommand{\hms}[2]{\mathrm{Hom}_S(#1,#2)}
\newcommand{\hyy}[3]{\mathrm{H}_{\underline{#1}}^{#2}(#3)}
\newcommand{\rmod}{R-\mathrm{Mod}}
\newcommand{\tth}{\text{th}}
\newcommand{\asl}[1]{\operatorname{Ass}_{S^{-1}R}S^{-1} #1}
\newcommand{\uijk}{U_{ij\lambda}}
\renewcommand{\Proj}{\mathrm{Proj}}
\renewcommand{\rad}{\mathrm{rad}}
\newcommand{\norm}[1]{\|#1\|}
\newcommand{\fil}[1]{\mathrm{Fil}^{#1}}
\newcommand{\Ql}{\overline{\mathbf{Q}}_{\ell}}
\newcommand{\GL}{\mathrm{GL}_n(\overline{\mathbf{Q}}_{\ell})}
\newcommand{\un}{F^{\mathrm{unr}}}
\newcommand{\tr}{F^{\mathrm{tr}}}
\newcommand{\tl}{t_{\ell}}
\newcommand{\gl}{\mathrm{GL}}
\newcommand{\en}{\mathrm{End}}
\newcommand{\spe}{\mathrm{SL}_2(\mathbb{F}_p)}
\newcommand{\gen}{\mathrm{GL}_2(\mathbb{F}_p)}
\newcommand{\glr}{\mathrm{GL}_n(\mathbb{R})}
\newcommand{\supp}{\mathrm{supp}\;}
\newcommand{\Qbar}{\overline{\mathbf{Q}}}
\newcommand{\Zbar}{\overline{\mathbb{Z}}_p}
\newcommand{\oks}{\O_{K,S}}
\newcommand{\cl}{\mathrm{Cl}}
\newcommand{\cls}{\mathrm{cl}}
\newcommand{\ord}{\mathrm{ord}}
\newcommand{\ab}{\mathrm{ab}}
\newcommand{\unr}{\mathrm{unr}}
\newcommand{\gal}{\mathrm{Gal}}
\newcommand{\vol}{\mathrm{vol}}
\newcommand{\reg}{\mathrm{Reg}}
\newcommand{\re}{\mathrm{Re}}
\newcommand{\spm}{\mathrm{Spm}}
\newcommand{\kunr}{K^\mathrm{unr}}
\newcommand{\kt}{K^\mathrm{t}}
\newcommand{\pnm}{\mathbb{P}^n_R\times_R\mathbb{P}^m_R}
\newcommand{\pnmr}{\mathbb{P}_R^{nm+n+m}}
\newcommand{\bk}{\mathrm{Breuil}\textendash\mathrm{Kisin}}
\newcommand{\bkf}{\mathrm{Breuil}\textendash\mathrm{Kisin}\textendash\mathrm{Fargues}}
\renewcommand{\ss}{\mathrm{ss}}
\newcommand{\crys}{\mathrm{crys}}
\newcommand{\Ao}{A^{\circ}}
\newcommand{\Ddr}{D_{\mathrm{dR}}(\fM_{\Ao}^{\inf})}
\newcommand{\und}[1]{\underline{#1}}
\newcommand{\xdss}{\X_d^{\ss,\underline{\lambda},\tau}}
\newcommand{\xdcrys}{\X_d^{\crys,\underline{\lambda},\tau}}
\newcommand{\BK}{\mathrm{BK}}
\newcommand{\disc}{\mathrm{disc}}
\newcommand{\kbas}{K^\mathrm{basic}}
\newcommand{\cyc}{\mathrm{cyc}}
\newcommand{\aka}{\mathbf{A}_{K,A}}

\newcommand{\ak}[1]{\mathbf{A}_{K,#1}}
\newcommand{\lt}{\mathrm{LT}}
\newcommand{\xan}{X^{\mathrm{an}}}
\newcommand{\ur}[1]{\mathrm{ur}_{#1}}
\title{\large\bf The Emerton--Gee stack of rank one $(\varphi,\Gamma)$-modules}
\author{{\sc\normalsize Dat Pham}}
\date{}
\maketitle
\begin{abstract}
    We give a classification of rank one $(\varphi,\Gamma)$-modules with coefficients in a $p$-adically complete $\mathbf{Z}_p$-algebra. As a consequence, we obtain a new proof of \cite[Prop. 7.2.17]{EG22}, which gives an explicit description of the Emerton--Gee stack of $(\varphi,\Gamma)$-modules in the rank one case. In fact, our method also applies in the context of rank one \' etale $\varphi$-modules (i.e. in the absence of a $\Gamma$-action), generalizing another result of Emerton--Gee. 
\end{abstract}
\setcounter{tocdepth}{1}

\tableofcontents
\section{Introduction}
For simplicity, we fix our local field to be $\mathbf{Q}_p$ in this introduction. Let $\mathbf{A}'_{\mathbf{Q}_p}$ be the $p$-adic completion of the Laurent series ring $\mathbf{Z}_p((T))$, endowed with the usual commuting semilinear actions of $\varphi$ and $\Gamma=\mathbf{Z}_p^\times$. An \' etale $(\varphi,\Gamma)$-module is, by definition, a finite $\mathbf{A}'_{\mathbf{Q}_p}$-module endowed
with commuting semilinear actions of $\varphi$ and $\Gamma$, with the property that the linearized action of $\varphi$ is an isomorphism. The most important feature of \' etale $(\varphi,\Gamma)$-modules is that they are naturally equivalent to continuous representations of $G_{\mathbf{Q}_p}$ on finite $\mathbf{Z}_p$-modules (cf. \cite{Fon90}). 

In \cite{EG22}, Emerton and Gee define and study moduli stacks parametrizing families of \' etale $(\varphi,\Gamma)$-modules. More specifically, they consider the stack $\X_d$ over $\Spf\mathbf{Z}_p$ whose groupoid of $A$-valued points, for any $p$-adically complete $\mathbf{Z}_p$-algebra $A$, is given by the groupoid of rank $d$ projective \' etale $(\varphi,\Gamma)$-modules over $\mathbf{A}'_{\mathbf{Q}_p,A}:=\mathbf{A}'_{\mathbf{Q}_p}\widehat{\otimes}_{\mathbf{Z}_p}A$. The geometry of the stack $\X_d$ has been studied extensively in \cite{EG22}. In particular, the authors show that $\X_d$ is a Noetherian formal algebraic stack, and moreover, its underlying reduced substack is an algebraic stack of finite type over $\mathbf{F}_p$, whose irreducible components admit a natural labelling by Serre weights. 

The goal of this note is to prove the following classification of families of rank one \' etale $(\varphi,\Gamma)$-modules.
\begin{thm}[Theorem \eqref{main prop}]
Let $A$ be a $p$-adically complete $\mathbf{Z}_p$-algebra. Let $M$ be a rank one \' etale $(\varphi,\Gamma)$-module with $A$-coefficients. Then $M$ has the form $\mathbf{A}'_{\mathbf{Q}_p,A}(\delta)\otimes_AL$ for some character $\delta: \mathbf{Q}_p^\times\to A^\times$ and some invertible $A$-module $L$. Here, $\mathbf{A}'_{\mathbf{Q}_p,A}(\delta)$ denotes the free $(\varphi,\Gamma)$-module of rank 1 with a basis $v$
for which $\varphi(v)=\delta(p)v$ and $\gamma(v)=\delta(\gamma)v$ for $\gamma\in\mathbf{Z}_p^\times$.
\end{thm}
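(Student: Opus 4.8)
The plan is to reduce the classification to two inputs: (1) a structural description of the underlying $\varphi$-module forgetting $\Gamma$, and (2) a cohomological computation showing that once the $\varphi$-structure is standard, the $\Gamma$-action can be conjugated into the ``constant'' form $\gamma(v)=\delta(\gamma)v$. First I would show that the underlying $\mathbf{A}'_{\mathbf{Q}_p,A}$-module of $M$ is locally free of rank one; étaleness of $\varphi$ combined with base change lets one check this after reducing mod $p$ and then along $\mathbf{F}_p((T))\hookleftarrow\mathbf{A}'_{\mathbf{Q}_p}/p$, where the classification of étale $\varphi$-modules over a field is classical. Zariski-locally on $\Spec A$ we may then trivialize $L$ and assume $M=\mathbf{A}'_{\mathbf{Q}_p,A}\cdot e$ is free; the invertible module $L$ is precisely the obstruction to doing this globally, so it will reappear at the gluing stage.

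Next, working with the free module $M\cong \mathbf{A}'_{\mathbf{Q}_p,A}$, the Frobenius is given by $\varphi(e)=\alpha e$ for a unit $\alpha\in(\mathbf{A}'_{\mathbf{Q}_p,A})^\times$. The key normalization step is to show that after changing basis $e\mapsto u e$ with $u$ a unit, one may take $\alpha$ to lie in $A^\times$, i.e. $\alpha$ becomes a constant; this amounts to solving $\varphi(u)/u=\alpha/c$ for some $c\in A^\times$, which is a unit-valued ``$\varphi$-descent'' equation. I expect to solve this by the usual successive-approximation/Newton argument: reduce mod $p$, where over $\mathbf{F}_p((T))$ one uses that $\varphi$ is highly contracting on the maximal ideal to kill the $T$-dependence, then lift step by step up the $p$-adic filtration, using $p$-adic completeness of $A$ to pass to the limit. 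This identifies $c=\delta(p)$. Then one runs the analogous argument for each $\gamma\in\Gamma$: the $\Gamma$-action is $\gamma(e)=\beta_\gamma e$ with $\beta_\gamma$ a unit, the cocycle relation $\beta_{\gamma\gamma'}=\gamma(\beta_{\gamma'})\beta_\gamma$ holds, and commutation with $\varphi$ forces compatibility; one shows the $\beta_\gamma$ can be simultaneously conjugated (by a single further unit, automatically compatible with the already-normalized $\varphi$ because the relevant obstruction group vanishes) into $A^\times$, yielding a homomorphism $\mathbf{Z}_p^\times\to A^\times$, $\gamma\mapsto\delta(\gamma)$. Continuity and the interpolation $\delta(p)$, $\delta|_{\mathbf{Z}_p^\times}$ assemble into a character $\delta:\mathbf{Q}_p^\times\to A^\times$ via $\mathbf{Q}_p^\times\cong p^{\mathbf{Z}}\times\mathbf{Z}_p^\times$.

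Finally I would globalize: the above produces, Zariski-locally on $\Spec A$, an isomorphism $M|_{\Spec A_i}\cong\mathbf{A}'_{\mathbf{Q}_p,A_i}(\delta_i)$, and comparing the $\varphi$-eigenvalues (which are intrinsic constants) forces the local characters $\delta_i$ to agree on overlaps, so they glue to a global $\delta:\mathbf{Q}_p^\times\to A^\times$; the discrepancy between the local trivializations is then a line bundle $L$ on $\Spec A$, giving $M\cong\mathbf{A}'_{\mathbf{Q}_p,A}(\delta)\otimes_A L$. The main obstacle is the normalization step making the unit $\alpha$ (and the $\beta_\gamma$) constant: one must control the successive approximation over a general $p$-adically complete — not necessarily Noetherian or reduced — ring $A$, so the contraction estimates have to be made uniform in $A$, and one must check that normalizing $\varphi$ first does not obstruct the subsequent normalization of $\Gamma$, i.e. that the relevant $H^1$ of the ``$\varphi=1$'' complex acting on units vanishes. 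I would isolate this as the technical heart of the argument and handle the rest by standard descent.
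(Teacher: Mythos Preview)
Your approach is quite different from the paper's, and the key normalization step has a genuine gap. You propose to solve $\varphi(u)/u=\alpha/c$ by a contraction argument, but over $\mathbf{F}_p((T))$ the image of $u\mapsto\varphi(u)/u$ on units is exactly $T^{(p-1)\mathbf{Z}}\cdot(1+T\mathbf{F}_p[[T]])$, so the $T$-valuation of $\alpha$ modulo $p-1$ obstructs this; e.g.\ the \'etale $\varphi$-module with $\varphi(e)=Te$ admits no such normalization. This obstruction does vanish when a compatible $\Gamma$-action is present (compare leading coefficients in $\varphi(\beta_\gamma)/\beta_\gamma=\gamma(\alpha)/\alpha$), but you do not invoke $\Gamma$ at this stage, and over rings with nilpotents the analogous reduction is genuinely hard: for $A=\mathbf{F}_p[\epsilon]/(\epsilon^2)$ and $\alpha=1+\epsilon g$ one must solve $(\varphi-1)k=c_0-g$, yet $\varphi-1$ has infinite-dimensional cokernel on $\mathbf{F}_p((T))$, so one is forced into a Herr-complex computation that uses $\Gamma$ essentially. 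By contrast, your concern about normalizing $\Gamma$ after $\varphi$ is a non-issue: once $\varphi(e)=ce$ with $c\in A^\times$, commutation forces $\beta_\gamma\in(\mathbf{A}'_{\mathbf{Q}_p,A})^{\varphi=1}=A$ automatically. So all the difficulty lives in the $\varphi$-step, and your contraction argument does not carry it.

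The paper avoids any direct normalization. It first reduces, via the Herr complex together with Dee's equivalence at Artinian points, to reduced finite-type $\mathbf{F}$-algebras. Then, rather than computing with units, it brings in the closed formal substack $\X_1^{\mathrm{ur}}\hookrightarrow\X_1$ of crystalline weight-$0$ characters (whose construction in \cite{EG20} rests on Kisin's results on potentially semistable deformation rings) to show that the unramified-twist map $[\widehat{\mathbf{G}}_m/\widehat{\mathbf{G}}_m]\hookrightarrow\X_1$ is a closed immersion; twisting this by the finitely many Serre weights and a short topological argument on $(\X_1)_{\mathrm{red}}$ then finishes the proof.
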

As a consequence, we deduce the following explicit description of the stack $\X_1$.
\begin{cor}[Corollary \eqref{X 1 iso}]
There is an isomorphism
\begin{displaymath}
\left[\left(\Spf \mathbf{Z}_p[[\mathbf{Z}_p^\times]]\times\widehat{\mathbf{G}}_m\right)/\widehat{\mathbf{G}}_m\right]\xrightarrow{\sim} \X_1,
\end{displaymath}
where $\widehat{\mathbf{G}}_m$ denotes the $p$-adic completion of $\widehat{\mathbf{G}}_{m,\mathbf{Z}_p}$, and in the formation of the quotient stack, the action of $\widehat{\mathbf{G}}_m$ is taken to be trivial. 
\end{cor}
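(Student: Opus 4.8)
The plan is to write down an explicit morphism from the quotient stack to $\X_1$ and to prove that it induces an equivalence of groupoids on $A$-valued points for every $p$-adically complete $\mathbf{Z}_p$-algebra $A$; since both sides are stacks for the $p$-adic/formal topology, that suffices. First I would identify the source. As $\mathbf{Q}_p^\times = \mathbf{Z}_p^\times\times p^{\mathbf{Z}}$ with $p^{\mathbf{Z}}$ discrete, a continuous character $\delta\colon\mathbf{Q}_p^\times\to A^\times$ is the same as a continuous homomorphism $\mathbf{Z}_p^\times\to A^\times$ together with a unit $\delta(p)\in A^\times$; since $\mathrm{Hom}_{\mathrm{cont}}(\mathbf{Z}_p^\times,A^\times)$ is represented by $\mathrm{Spf}\,\mathbf{Z}_p[[\mathbf{Z}_p^\times]]$ and $\widehat{\mathbf{G}}_m(A)=A^\times$, the formal scheme $X:=\mathbf{Z}_p[[\mathbf{Z}_p^\times]]\times\widehat{\mathbf{G}}_m$ is the moduli space of continuous characters $\mathbf{Q}_p^\times\to\mathbf{G}_m$ and carries a universal one $\delta^{\mathrm{univ}}$. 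Because the $\widehat{\mathbf{G}}_m$-action on $X$ is trivial, $[X/\widehat{\mathbf{G}}_m]\simeq X\times B\widehat{\mathbf{G}}_m$, and for $p$-adically complete $A$ the groupoid $B\widehat{\mathbf{G}}_m(A)$ is that of invertible $A$-modules (via $\mathrm{Pic}(A)\xrightarrow{\sim}\varprojlim_n\mathrm{Pic}(A/p^n)$). Thus $[X/\widehat{\mathbf{G}}_m](A)$ is the groupoid of pairs $(\delta,L)$ with $\delta$ a character and $L$ an invertible $A$-module, a morphism $(\delta,L)\to(\delta',L')$ being an isomorphism $L\xrightarrow{\sim}L'$ if $\delta=\delta'$ and there being none otherwise.

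Next, the morphism. Over $X$ one forms the rank one étale $(\varphi,\Gamma)$-module $\mathbf{A}'_{\mathbf{Q}_p}\widehat{\otimes}_{\mathbf{Z}_p}\mathcal{O}_X(\delta^{\mathrm{univ}})$; this defines a morphism $X\to\X_1$ which is constant along $\widehat{\mathbf{G}}_m$-orbits, hence descends to $F\colon[X/\widehat{\mathbf{G}}_m]\to\X_1$, given on $A$-points by $(\delta,L)\mapsto\mathbf{A}'_{\mathbf{Q}_p,A}(\delta)\otimes_A L$. Essential surjectivity of $F$ is exactly the preceding classification of rank one étale $(\varphi,\Gamma)$-modules.

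Full faithfulness is the heart of the matter. A morphism $F(\delta,L)\to F(\delta',L')$ in $\X_1(A)$ is an isomorphism $\mathbf{A}'_{\mathbf{Q}_p,A}(\delta)\otimes_A L\xrightarrow{\sim}\mathbf{A}'_{\mathbf{Q}_p,A}(\delta')\otimes_A L'$ of $(\varphi,\Gamma)$-modules; its underlying $\mathbf{A}'_{\mathbf{Q}_p,A}$-linear map is an invertible element $\xi$ of $\mathbf{A}'_{\mathbf{Q}_p,A}\otimes_A(L^{-1}\otimes_A L')$, and, after trivialising $L,L'$ Zariski-locally on $\mathrm{Spec}\,A$, $(\varphi,\Gamma)$-equivariance reads $\varphi(\xi)=\mu\xi$ and $\gamma(\xi)=\eta(\gamma)\xi$ for $\gamma\in\mathbf{Z}_p^\times$, where $\mu=\delta(p)\delta'(p)^{-1}\in A^\times$ and $\eta=(\delta|_{\mathbf{Z}_p^\times})(\delta'|_{\mathbf{Z}_p^\times})^{-1}$. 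To force $\mu=1$ one uses that the ``$T^0$-coefficient'' map $c_0\colon\mathbf{A}'_{\mathbf{Q}_p,A}\to A$ satisfies $c_0\circ\varphi=c_0$ (since $\varphi(T^n)=((1+T)^p-1)^n$ has no $T^0$-term for $n\neq 0$): then $0=c_0(\varphi(\xi)-\xi)=(\mu-1)c_0(\xi)$, and $\xi$ being a unit forces its reduction modulo $p$ to be a constant lying in $(A/p)^\times$ (compare coefficients in $\varphi(\bar\xi)=\bar\mu\bar\xi$ over $\mathbf{F}_p((T))\otimes_{\mathbf{F}_p}A/p$), hence $c_0(\xi)\in A^\times$ and $\mu=1$. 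With $\mu=1$ we get $\varphi(\xi)=\xi$, and since $(\mathbf{A}'_{\mathbf{Q}_p,A})^{\varphi=1}=A$ — which follows from $(\mathbf{F}_p((T)))^{\varphi=1}=\mathbf{F}_p$ after reduction modulo $p$ (exactness of $-\otimes_{\mathbf{F}_p}A/p$) and successive approximation, using $p$-completeness of $A$ — we conclude $\xi\in A^\times$, so $\eta(\gamma)=1$ for all $\gamma$, i.e. $\delta=\delta'$. When $\delta=\delta'$ the same analysis puts $\xi$ in $(\mathbf{A}'_{\mathbf{Q}_p,A})^{\varphi=1,\Gamma=1}\otimes_A(L^{-1}\otimes_A L')=L^{-1}\otimes_A L'$, and invertibility of $\xi$ is exactly the condition that the corresponding map $L\to L'$ be an isomorphism. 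Thus $\mathrm{Hom}_{\X_1(A)}(F(\delta,L),F(\delta',L'))$ is canonically $\mathrm{Hom}_{[X/\widehat{\mathbf{G}}_m](A)}((\delta,L),(\delta',L'))$, so $F$ is fully faithful, and being also essentially surjective on every $A$-point it is an isomorphism of stacks.

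The step I expect to be the main obstacle is the full faithfulness, and specifically recovering $\delta$ from the $(\varphi,\Gamma)$-module over an arbitrary $p$-adically complete $A$ (possibly non-reduced and non-Noetherian). Over such $A$ one cannot simply invoke ``$H^0(\mathbf{A}'_{\mathbf{Q}_p,A}(\epsilon))=0$ for $\epsilon\neq 1$'' — for instance $\epsilon(p)-1$ may be a zerodivisor in $A$ — so the needed input must be phrased in base-change-stable terms: the identities $c_0\circ\varphi=c_0$ and $(\mathbf{A}'_{\mathbf{Q}_p,A})^{\varphi=1}=A$ (the latter valid because over $\mathbf{A}'_{\mathbf{Q}_p}$ the map $\varphi-1$ has kernel $\mathbf{Z}_p$ and $\mathbf{Z}_p$-flat cokernel, so this kernel commutes with $\widehat{\otimes}_{\mathbf{Z}_p}A$ and with reduction modulo $p^n$), combined with the fact that $\xi$ is a unit.
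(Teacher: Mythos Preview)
Your argument is correct, and its overall architecture matches the paper's: identify $[X/\widehat{\mathbf{G}}_m](A)$ with pairs $(\delta,L)$, send $(\delta,L)\mapsto \mathbf{A}'_{\mathbf{Q}_p,A}(\delta)\otimes_A L$, and invoke the classification theorem for essential surjectivity. Where you diverge is in the proof of full faithfulness. The paper packages this into the uniqueness clause of the main theorem, which it proves (Lemma~\ref{uniqueness}) by a reduction to Artinian points: one checks that $\xan\times_{\X_1}\xan\to\xan\times_{\Spf\O}\xan$ factors through the diagonal, and since this map is representable and of finite presentation (inherited from the diagonal of $\X_1$), it suffices by \cite[Lem.~7.1.14]{EG20} to verify the factorization on finite Artinian $\O$-algebras, where it is immediate from Dee's equivalence. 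The automorphism statement is then quoted from \cite[Lem.~2.2.19, Prop.~2.2.12]{EG20}.

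Your route is instead a direct computation inside $\mathbf{A}'_{\mathbf{Q}_p,A}$: the constant-coefficient map $c_0$ satisfies $c_0\circ\varphi=c_0$ (which you correctly justify --- $\varphi(T^n)$ contributes only to strictly positive, resp.\ strictly negative, degrees for $n>0$, resp.\ $n<0$), and combined with the mod-$p$ calculation forcing $\bar\xi\in(A/p)^\times$ this pins down $\mu=1$; then $(\mathbf{A}'_{\mathbf{Q}_p,A})^{\varphi=1}=A$ finishes the job. This is more elementary and self-contained: it avoids the finite-presentation properties of the diagonal of $\X_1$ and the limit-preserving machinery of \cite{EG20}. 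The trade-off is that your $c_0$-trick is tied to $K=\mathbf{Q}_p$ (where $k_\infty=\mathbf{F}_p$ and $\varphi(T)=(1+T)^p-1$), whereas the paper's reduction-to-Artinian argument works verbatim for any finite $K/\mathbf{Q}_p$, which is the generality in which Corollary~\ref{X 1 iso} is actually stated and proved. For the $\mathbf{Q}_p$-version in the introduction, however, your proof is complete.
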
 
We emphasize that the above description is already given in \cite[Prop. 7.2.17]{EG22}. However, our argument here is different; in particular it avoids the use of uniform bounds on the ramification of families of characters valued in finite Artinian algebras (see \cite[\textsection 7.3]{EG22}). 

\begin{remark}
In fact, our method also applies to give explicit descriptions of the stacks of rank one \' etale $\varphi$-modules (i.e. in the absence of a $\Gamma$-action), generalizing \cite[Prop. 7.2.11]{EG22} to a large class of coefficient rings. See Subsection \ref{etale phi description}.
\end{remark}
\begin{notation}
We mostly follow the notation in \cite{EG22}. In particular, we fix a finite extension $K/\mathbf{Q}_p$ with residue field $k$ and inertia degree $f$. Fix also an algebraic closure $\overline{K}$ of $K$, with absolute Galois group $G_K$, Weil group $W_K$, and inertia group $I_K$. As usual, $W_K^{\mathrm{ab}}$ denotes the abelianization of $W_K$, while $I_K^{\mathrm{ab}}$ denotes the image of $I_K$ in $W_K^{\mathrm{ab}}$. We denote by $\mathbf{C}^\flat$ the tilt of the completion $\mathbf{C}:=\widehat{\overline{K}}$, by $K_{\mathrm{cyc}}$ the cyclotomic $\mathbf{Z}_p$-extension of $K$ and by $k_{\infty}$ its residue field. We also fix a finite extension $E/\mathbf{Q}_p$ with ring of integers $\O$, which will serve as the base of our coefficients. As usual, $\varpi$ (resp. $\mathbf{F}$) denotes a uniformizer (resp. the residue field) of $\O$. We will fix throughout an embedding $k\hookrightarrow \mathbf{F}$.

We refer the reader to \cite[\textsection 2.2]{EG22} for the definition of the coefficient rings $\ak{A}$ of our $(\varphi,\Gamma)$-modules. Finally, as the field $K$ is fixed throughout, we will often drop $K$ from the notation in what follows. 
\end{notation}
\begin{acknowledgements}
I am grateful to Bao V. Le Hung and Stefano Morra for their encouragement and for various helpful discussions. I would also like to thank Matthew Emerton and Toby Gee for pointing out a mistake in a previous version, as well as the referee for some useful suggestions which help improve the readability of the paper. This project has received funding from the European Union’s Horizon 2020 research and innovation programme under the Marie Sk\l odowska-Curie grant agreement No. 945322.
\end{acknowledgements}
\section{\texorpdfstring{$(\varphi,\Gamma)$}{(phi,Gamma)}-modules associated to characters of the Weil group}
In this section, we explain how to associate a free \' etale $(\varphi,\Gamma)$-module of rank 1 to any character of $W_K$. 

First recall the following result of Dee, which is a generalization of Fontaine's equivalence between Galois representations on finite $\mathbf{Z}_p$-modules and \' etale $(\varphi,\Gamma)$-modules to the context with coefficients. For simplicity, we only state the result for Artinian coefficients. 
\begin{thm}[\cite{Dee01}]\label{Dee}
Let $A$ be a finite Artinian local $\O$-algebra, and let $W(\mathbf{C}^\flat)_A:=W(\mathbf{C}^\flat)\otimes_{\mathbf{Z}_p}A$. Then the functor 
\begin{displaymath}
M\mapsto T_A(M):=(W(\mathbf{C}^\flat)_A\otimes_{\aka}M)^{\varphi=1}
\end{displaymath}
defines an equivalence between the category of finite projective \' etale $(\varphi,\Gamma)$-modules with $A$-coefficients, and the category of finite free $A$-modules with a continuous action of $G_K$.
\end{thm}
We want to extend the above construction of rank one \' etale $(\varphi,\Gamma)$-modules from Galois characters to the case where $A$ is an arbitrary $\varpi$-adically complete $\O$-algebra. We begin with the case of unramified characters.   
\begin{lem}\label{universal unramifed char lem}
Let $A$ be an $\O$-algebra, and let $a\in A^\times$. Then, up to isomorphism, there is a unique free \' etale $\varphi$-module  $D_{k,a}$ of rank one over $W(k)\otimes_{\mathbf{Z}_p}A$ with the property that $\varphi^f=1\otimes a$ on $D_{k,a}$.
\end{lem}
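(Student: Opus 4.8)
The plan is to make the base ring $R:=W(k)\otimes_{\mathbf{Z}_p}A$ and the Frobenius on it completely explicit; once that is done, both existence and uniqueness follow from a short normal-form computation. Here $\varphi$ denotes the Frobenius, acting on $W(k)$ as the Witt-vector Frobenius and on $R$ by $A$-linear extension, so that $\varphi^{f}=\mathrm{id}$ on $W(k)$ and hence on $R$. Since $k\subseteq\mathbf{F}$ we have $W(k)\subseteq\O$, so $A$ is a $W(k)$-algebra, and $W(k)/\mathbf{Z}_p$ is finite étale of degree $f$ with automorphism group cyclic, generated by $\varphi$. Starting from the standard decomposition $W(k)\otimes_{\mathbf{Z}_p}W(k)\xrightarrow{\sim}\prod_{j\in\mathbf{Z}/f\mathbf{Z}}W(k)$, $x\otimes y\mapsto(x\varphi^{j}(y))_{j}$, and base-changing along $W(k)\to A$, I would first record the isomorphism of $A$-algebras
\[
R\ \xrightarrow{\ \sim\ }\ \prod_{j\in\mathbf{Z}/f\mathbf{Z}}A,\qquad w\otimes a\longmapsto\bigl(\varphi^{j}(w)\,a\bigr)_{j},
\]
under which $\varphi$ becomes the cyclic shift $(x_{j})_{j}\mapsto(x_{j+1})_{j}$, the scalar $1\otimes a$ becomes the constant tuple $(a,\dots,a)$, and $R^{\times}=(A^{\times})^{f}$. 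This is the only step that is not purely formal.

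Next I would put an arbitrary free rank-one $\varphi$-module $D$ over $R$ into normal form. Choosing a basis $v$ one has $\varphi(v)=\beta v$ for a unique $\beta=(\beta_{0},\dots,\beta_{f-1})\in R$, and $D$ is étale precisely when the linearization $R\otimes_{R,\varphi}D\to D$ is an isomorphism, i.e. $\beta\in R^{\times}$. Iterating the defining relation gives $\varphi^{f}(v)=\bigl(\prod_{i=0}^{f-1}\varphi^{i}(\beta)\bigr)v$, and since the $j$-th coordinate of $\prod_{i}\varphi^{i}(\beta)$ is $\prod_{i=0}^{f-1}\beta_{j+i}=\prod_{l=0}^{f-1}\beta_{l}$ for every $j$, this product equals the constant tuple $1\otimes N(\beta)$, where $N(\beta):=\prod_{l=0}^{f-1}\beta_{l}\in A$. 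Hence an étale rank-one $D$ satisfies $\varphi^{f}=1\otimes a$ on $D$ if and only if $\prod_{l}\beta_{l}=a$; and since $a\in A^{\times}$ this equation by itself forces each $\beta_{l}\in A^{\times}$, so the étale hypothesis is automatic.

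For existence, I would take $\beta=(a,1,\dots,1)$ and let $D_{K,a}$ be the associated $\varphi$-module. For uniqueness, suppose $D$ and $D'$ correspond to tuples $\beta,\beta'$ with $\prod_{l}\beta_{l}=\prod_{l}\beta'_{l}=a$. A $\varphi$-module isomorphism $D\xrightarrow{\sim}D'$ taking the chosen basis of $D$ to $u$ times that of $D'$ is the same as a unit $u\in R^{\times}$ with $\varphi(u)/u=\beta/\beta'$. Writing $\gamma:=\beta/\beta'$, so that $\prod_{l}\gamma_{l}=1$, the equations $u_{j+1}/u_{j}=\gamma_{j}$ are solved by telescoping: set $u_{0}:=1$ and $u_{j}:=\gamma_{0}\cdots\gamma_{j-1}$, the only consistency condition being $u_{f}=u_{0}$, i.e. $\prod_{l}\gamma_{l}=1$, which holds. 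This $u$ lies in $R^{\times}$ and provides the required isomorphism, so $D_{K,a}$ is unique up to isomorphism.

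The difficulty here is organizational rather than conceptual: the crux is establishing the explicit model $R\cong\prod_{j}A$ with $\varphi$ the cyclic shift and $1\otimes a$ the constant tuple. Once that is in place, the computation of $\varphi^{f}(v)$ and the telescoping (a baby case of Hilbert 90) are routine, and the generality of $A$ — an arbitrary $\O$-algebra rather than, say, a finite local one — is harmless since everything has been reduced to explicit arithmetic of units in $A$.
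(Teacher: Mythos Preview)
Your proof is correct and follows essentially the same approach as the paper: both reduce the statement to the surjectivity of the norm map $R^{\times}\to A^{\times}$ and to its kernel being $\{\varphi(y)/y\}$, via the product decomposition $W(k)\otimes_{\mathbf{Z}_p}A\cong\prod_{j\in\mathbf{Z}/f\mathbf{Z}}A$ (the paper writes the isomorphism on $x\otimes 1$, you on $w\otimes a$, but it is the same map). The paper simply states that the lemma ``then follows easily using this isomorphism,'' and what you have written is precisely the expansion of that remark---the explicit normal form, the norm computation $\varphi^{f}(v)=(1\otimes N(\beta))v$, the choice $\beta=(a,1,\dots,1)$ for existence, and the telescoping solution of $\varphi(u)/u=\gamma$ for uniqueness.
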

\begin{proof}
We need to show that the norm map $(W(k)\otimes_{\mathbf{Z}_p}A)^\times\to A^\times, x\mapsto N(x):=x\varphi(x)\ldots \varphi^{f-1}(x)$ is surjective with kernel given by the set $\{\varphi(y)/y\;|\;y\in (W(k)\otimes_{\mathbf{Z}_p}A)^\times\}$. Since $\mathbf{F}$ is assumed to contain $k$, $\O$ (and hence $A$) is naturally a $W(k)$-algebra. In particular, we have an $A$-algebra isomorphism $W(k)\otimes_{\mathbf{Z}_p}A\to \prod A, x\otimes 1\mapsto (x,\varphi(x),\ldots,\varphi^{f-1}(x))$. The lemma then follows easily using this isomorphism. 
\end{proof}
\begin{defn}
Let $A$ be a $\varpi$-adically complete $\O$-algebra, and let $a\in A^\times$. Define $\aka(\ur{a}):=D_{k,a}\otimes_{W(k)\otimes_{\mathbf{Z}_p}A}\ak{A}$. This is a rank one \' etale $(\varphi,\Gamma)$-module with $A$-coefficients, where we let $\varphi$ act diagonally, and $\Gamma$ act on the second factor. 
\end{defn}
\begin{lem}\label{universal unramified}
Let $A$ be a finite Artinian local $\O$-algebra, and let $a\in A^\times$. Then, under Dee's equivalence \eqref{Dee}, $\aka(\ur{a})$ corresponds to the unramified character $\ur{a}$ of $G_K$ sending geometric Frobenii to $a$.
\end{lem}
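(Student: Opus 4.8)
The plan is to compute the $G_K$-representation $T_A(\aka(\ur{a}))$ attached to $\aka(\ur{a})$ by the quasi-inverse functor $M \mapsto T_A(M) = (W(\mathbf{C}^\flat)_A \otimes_{\aka} M)^{\varphi=1}$ of Theorem \ref{Dee}, and to check that it is the unramified character $\ur{a}$. First I would unwind the construction $\aka(\ur{a}) = D_{K,a} \otimes_{W(k)\otimes_{\mathbf{Z}_p}A} \aka$. Since $\varphi$ acts diagonally and $\Gamma$ acts only on the factor $\aka$, base change along $\aka \hookrightarrow W(\mathbf{C}^\flat)_A$ yields a $\varphi$- and $G_K$-equivariant isomorphism
\[
W(\mathbf{C}^\flat)_A \otimes_{\aka} \aka(\ur{a}) \;\cong\; W(\mathbf{C}^\flat)_A \otimes_{W(k)\otimes_{\mathbf{Z}_p}A} D_{K,a},
\]
with $\varphi$ still diagonal. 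The point is that $I_K$ acts trivially on the residue field $\overline{k}$ of $\overline{K}$, so the $G_K$-action on $\overline{k}$ factors through $\mathrm{Gal}(\overline{k}/k)$; in particular $G_K$ fixes $W(k) \subseteq W(\overline{k}) \subseteq W(\mathbf{C}^\flat)$, hence fixes $W(k) \otimes_{\mathbf{Z}_p} A$, so after the base change above it acts on the right-hand side purely through $W(\mathbf{C}^\flat)_A$, with $D_{K,a}$ carrying the trivial action. This is exactly the mechanism by which the $\Gamma$-action built into $\aka$ drops out of the computation.

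Next I would trivialize $D_{K,a}$ over the larger ring $W(\overline{k}) \otimes_{\mathbf{Z}_p} A$. Fix the basis $e$ of $D_{K,a}$ with $\varphi(e) = u e$, where $u \in (W(k)\otimes_{\mathbf{Z}_p}A)^\times$ satisfies $\mathrm{N}(u) := u\varphi(u)\cdots\varphi^{f-1}(u) = 1 \otimes a$, as in the construction of $D_{K,a}$. Every rank one \'etale $\varphi$-module over $W(\overline{k})\otimes_{\mathbf{Z}_p}A$ is trivial --- equivalently, $y \mapsto \varphi(y)y^{-1}$ is surjective on $(W(\overline{k})\otimes_{\mathbf{Z}_p}A)^\times$, which one checks by $\varpi$-adic and $\mathfrak{m}_A$-adic d\'evissage just as in the first lemma of this section --- so there is $c \in (W(\overline{k})\otimes_{\mathbf{Z}_p}A)^\times$ with $c = u\,\varphi(c)$, and then $v := c\,e$ satisfies $\varphi(v) = v$ and is a basis of $W(\mathbf{C}^\flat)_A \otimes_{W(k)\otimes_{\mathbf{Z}_p}A}D_{K,a}$ over $W(\mathbf{C}^\flat)_A$. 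Using that $\mathbf{C}^\flat$ is algebraically closed one has $(W(\mathbf{C}^\flat)\otimes_{\mathbf{Z}_p}A)^{\varphi=1} = A$ (the same d\'evissage), so $T_A(\aka(\ur{a})) = (W(\mathbf{C}^\flat)_A\cdot v)^{\varphi=1} = A\cdot v$, free of rank one. For $g \in G_K$ we get $g(v) = g(c)\,e = (g(c)c^{-1})\,v$; the scalar $\chi(g) := g(c)c^{-1}$ is $\varphi$-fixed (apply $\varphi$, and use $c = u\varphi(c)$ together with $g(u) = u$), hence lies in $(W(\overline{k})\otimes_{\mathbf{Z}_p}A)^{\varphi=1} = A$, and $g \mapsto \chi(g)$ is a character $G_K \to A^\times$, unramified because $I_K$ fixes $c$. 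Finally, taking $g$ to be a lift of the arithmetic Frobenius of $\mathrm{Gal}(\overline{k}/k)$, so that $g$ acts on $W(\overline{k})$ as $\varphi^{f}$, and iterating $c = u\varphi(c)$ gives $g(c) = \varphi^{f}(c) = \mathrm{N}(u)^{-1}c = (1\otimes a)^{-1}c$, i.e.\ $\chi(g) = a^{-1}$; hence $\chi$ sends geometric Frobenii to $a$, so $\chi = \ur{a}$, and Theorem \ref{Dee} gives the claim.

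The step I expect to be the main obstacle is the last one, namely pinning down the Frobenius normalizations: one must be careful about the distinction between geometric and arithmetic Frobenius in $G_K$, and about the identification of the Witt-vector Frobenius $\varphi$ with (a power of) the Frobenius of $\overline{k}$, in order to be certain that the defining relation $\varphi^{f} = 1\otimes a$ of $D_{K,a}$ translates into the value $a$ on a \emph{geometric} Frobenius --- a sign slip here would replace $\ur{a}$ by $\ur{a^{-1}}$. The remaining parts are routine, the only other point requiring a little care being the claim in the first step that the $\Gamma$-action coming from $\aka$ genuinely disappears after base change, so that the whole $G_K$-action on $T_A(\aka(\ur{a}))$ lives on the $W(\mathbf{C}^\flat)_A$ factor.
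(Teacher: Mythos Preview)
Your proposal is correct and follows essentially the same route as the paper: both find a $\varphi$-fixed basis of $W(\mathbf{C}^\flat)_A\otimes_{\aka}\aka(\ur{a})$ lying in $W(\overline{\mathbf{F}}_p)\otimes_{\mathbf{Z}_p}A$, deduce that $I_K$ acts trivially, and then compute the Frobenius action via the relation $\varphi^f(v)=av$ (your iteration of $c=u\varphi(c)$ is exactly this). The only organisational difference is that you first establish the existence of the trivialising unit $c$ (by the surjectivity of $y\mapsto\varphi(y)y^{-1}$ on $(W(\overline{k})\otimes A)^\times$) and then compute the Galois action, whereas the paper first assumes such a basis, computes, and afterwards produces it by an explicit d\'evissage through square-zero thickenings; your concern about the Frobenius normalization is handled correctly in both arguments.
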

\begin{proof}
By definition, the rank one $A$-representation of $G_{K}$ corresponding to $\aka(\ur{a})$ is given by
\begin{align*}
    V &:= (W(\mathbf{C}^\flat)_A\otimes_{\aka}\aka(\ur{a}))^{\varphi=1}\\
    &\cong \{hv\;|\; h\in W(\mathbf{C}^\flat)\otimes_{\mathbf{Z}_p}A\;\text{such that $\varphi(hv)=hv$}\},
\end{align*}
where $v$ is a basis of $D_{k,a}$. Assume $V$ has a basis $hv$ with $h\in W(\overline{\mathbf{F}}_p)\otimes_{\mathbf{Z}_p}A$. We verify that $G_{K}$ acts on this basis via the unramified character taking geometric Frobenii to $a$. First, for $\sigma\in I_K$, we have $\sigma(hv)=\sigma(h)v=hv$ (note that $\sigma(h)=h$ as $h\in W(\overline{\mathbf{F}}_p)\otimes_{\mathbf{Z}_p}A$). Now, let $\sigma=\varphi_q^{-1}$ be an arithmetic Frobenius. From the relation $\varphi(hv)=hv$ and the fact that $\varphi^f(v)=av$, we obtain $\sigma(h)av=\varphi^f(hv)=hv$ whence $\sigma(hv)=\sigma(h)v=a^{-1}(hv)$, as desired. 

It remains to find a basis as stated. If $A$ is a field, say $A=\mathbf{F}_q$ for some finite extension $\mathbf{F}_q/\mathbf{F}$, this can be done by using the ring isomorphism $\mathbf{C}^\flat\otimes_{\mathbf{F}_p}\mathbf{F}_q\xrightarrow{\sim} \prod \mathbf{C}^\flat$.
Indeed, $h$ is a vector in $\prod \mathbf{C}^\flat$ whose coordinates satisfy a finite set of polynomial equations with coefficients in $\overline{\mathbf{F}}_p$, so it necessarily lies in $\prod\overline{\mathbf{F}}_p$. In general, by factoring $A\twoheadrightarrow A/\m_A$ as a chain of square-zero thickenings, we may assume that, for some ideal $I$ with $I^2=0$, there is a basis $\overline{h}v$ of $V/I V$ with $\overline{h}\in W(\overline{\mathbf{F}}_p)\otimes_{\mathbf{Z}_p}(A/I)$. Let $h\in W(\overline{\mathbf{F}}_p)\otimes_{\mathbf{Z}_p}A$ be a lift of $\overline{h}$. Then $\varphi(hv)=g(hv)$ for some $g\in 1+W(\overline{\mathbf{F}}_p)\otimes I$, say $g=1+g_1\otimes m_1+\ldots+g_n\otimes m_n$ with $g_i\in W(\overline{\mathbf{F}}_p)$ and $m_i\in I$. For each $i$, choose $h_i\in W(\overline{\mathbf{F}}_p)$ such that $\varphi(h_i)-h_i=-g_i$. Let $f:=1+h_1\otimes m_1+\ldots h_n\otimes m_n\in 1+W(\overline{\mathbf{F}}_p)\otimes I$. Then $\varphi(f)=f/g$, and hence $\varphi(hfv)=hfv$. Finally, as $(hf)v$ lifts a basis of $V/I V$, it is a basis of $V$ by Nakayama's lemma.
\end{proof}
Recall that $\widehat{\mathbf{G}}_m$ denotes the $\varpi$-adic completion of $\mathbf{G}_{m,\O}$. We denote the resulting map $\widehat{\mathbf{G}}_m\to \X_1, a \mapsto \aka(\ur{a})$ by $\ur{x}$ (where we can think of $x$ as the coordinate on $\widehat{\mathbf{G}}_m=\Spf \O[x,x^{-1}]$), and refer to it simply as the universal unramified character\footnote{Note that the definition given in \cite[\textsection 5.3]{EG22} of the map $\ur{x}$ is slightly incorrect in the sense that it does not agree with the construction of Dee for Artinian coefficients.}. 

We now consider the case of a general character of the Weil group $W_K$. It is convenient to introduce some notation.
\begin{defn}
Let $\xan$ be the functor on $\varpi$-adically complete $\O$-algebras taking $A$ to the set of (continuous) characters $\delta: W_K\to A^\times$. 
\end{defn}
As a first remark, we note that fixing a geometric Frobenius $\sigma\in G_K$ (or equivalently, an isomorphism $W_K^{\mathrm{ab}}\cong I_K^{\mathrm{ab}}\times\mathbf{Z}$) is equivalent to fixing an isomorphism of (Noetherian) affine  formal schemes
\begin{displaymath}
    \xan\simeq \Spf\O[[I_K^{\mathrm{ab}}]]\times \widehat{\mathbf{G}}_{m} 
\end{displaymath}
over $\Spf \O$. Concretely, at the level of $A$-valued points (with $A$ a $\varpi$-adically complete $\O$-algebra), this is given by the assignment $(\delta: W_K\to A^\times)\mapsto (\delta|_{I_K^{\mathrm{ab}}},\delta(\sigma))$. 

In what follows, we will always endow $\xan$ with the trivial action of $\widehat{\mathbf{G}}_m$. 
\begin{lem}\label{delta-->aka(delta)}
There is a morphism $\xan\to \X_1, \delta\mapsto \aka(\delta)$ with the property that for any finite Artinian $\O$-algebra $A$, the $(\varphi,\Gamma)$-module $\aka(\delta)$ corresponds, under Dee's equivalence \eqref{Dee}, to the character $\delta$.
\end{lem}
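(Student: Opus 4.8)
The plan is to split off the unramified part of $\delta$, handle it with the rank one $(\varphi,\Gamma)$-module $\aka(\ur{a})$ introduced above, and construct the remaining inertial part over the universal base $\Spf\O[[I_K^{\ab}]]$ by bootstrapping Dee's equivalence \eqref{Dee} from finite Artinian quotients.

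Using the chosen geometric Frobenius $\sigma$, which identifies $W_K^{\ab}\cong I_K^{\ab}\times\mathbf{Z}$, one factors any continuous $\delta\colon W_K\to A^\times$ as $\delta=\ur{a}\cdot\delta^0$ with $a:=\delta(\sigma)\in A^\times$ and $\delta^0$ the character satisfying $\delta^0(\sigma)=1$, $\delta^0|_{I_K}=\delta|_{I_K}$; thus $\delta^0$ is the inflation along $W_K^{\ab}\twoheadrightarrow I_K^{\ab}$ of a continuous character of the profinite group $I_K^{\ab}$. I would then \emph{define} $\aka(\delta):=\aka(\ur{a})\otimes_{\aka}\aka(\delta^0)$; equivalently, the required morphism is the composite $\xan\cong\Spf\O[[I_K^{\ab}]]\times\widehat{\mathbf{G}}_m\xrightarrow{(\aka(\delta_0^{\mathrm{univ}}),\,\ur{x})}\X_1\times_{\Spf\O}\X_1\xrightarrow{\otimes}\X_1$, where $\ur{x}$ is the universal unramified character, $\otimes$ the tensor product of rank one étale $(\varphi,\Gamma)$-modules (again rank one and étale), and $\aka(\delta_0^{\mathrm{univ}})\in\X_1(\O[[I_K^{\ab}]])$ is the object attached to the tautological character $\delta_0^{\mathrm{univ}}\colon I_K^{\ab}\to\O[[I_K^{\ab}]]^\times$, built as follows.

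Set $R_0:=\O[[I_K^{\ab}]]$ and let $\fka:=\ker(R_0\to\O)$ be the augmentation ideal. The ring $R_0$ is Noetherian and $(\varpi,\fka)$-adically complete, and each quotient $R_0/(\varpi^n,\fka^N)$ is a \emph{finite} $\O$-algebra, hence Artinian; moreover $\delta_0^{\mathrm{univ}}$ has open kernel modulo $(\varpi^n,\fka^N)$, so it factors through a finite quotient of $I_K^{\ab}$ and inflates, via $G_K^{\ab}\twoheadrightarrow I_K^{\ab}$, to a finite-order character of $G_K$. Decomposing $R_0/(\varpi^n,\fka^N)$ into its finitely many local factors and applying \eqref{Dee} on each yields a rank one étale $(\varphi,\Gamma)$-module $M_{n,N}$; functoriality of Dee's equivalence and its compatibility with base change along the transition maps assemble the $M_{n,N}$ into a compatible system, whose limit is $\aka(\delta_0^{\mathrm{univ}})$ — here one uses that $\X_1(R_0)=\varprojlim_{n,N}\X_1(R_0/(\varpi^n,\fka^N))$, i.e. that finite projective $(\varphi,\Gamma)$-modules glue along the $(\varpi,\fka)$-adic tower. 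For an arbitrary $\varpi$-adically complete $A$ and $\delta^0$ as above, $\aka(\delta^0)$ is then the base change of $\aka(\delta_0^{\mathrm{univ}})$ along the classifying map $R_0\to A$.

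It remains to check the compatibility with Dee when $A$ is finite Artinian. In that case the classifying map $R_0\to A$ of $\delta^0$ factors through some $R_0/(\varpi^n,\fka^N)$, so $\aka(\delta^0)$ is the base change of $M_{n,N}$, which by construction and base-change compatibility of \eqref{Dee} equals $\mathbf{D}_A(\delta^0)$ (with $\delta^0$ regarded as a $G_K$-character through $G_K^{\ab}\twoheadrightarrow I_K^{\ab}$); combined with Lemma \eqref{universal unramified}, which gives $\aka(\ur{a})\cong\mathbf{D}_A(\ur{a})$, and with the compatibility of the quasi-inverse $T_A$ with tensor products of rank one objects — immediate from the trivialization over $W(\mathbf{C}^\flat)_A$ contained in \eqref{Dee} — this gives $T_A(\aka(\delta))=\ur{a}\otimes_A\delta^0$, whose restriction to $W_K$ is $\delta$, as desired. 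The main obstacle is precisely this bootstrapping: \eqref{Dee} is stated only for finite Artinian \emph{local} coefficients, so one must upgrade it — via products over local factors and then an inverse limit over the ideals $(\varpi^n,\fka^N)$ — to the non-Artinian, non-local Iwasawa algebra $R_0=\O[[I_K^{\ab}]]$, and verify that the resulting object is independent of the choices made and that its formation commutes with base change.
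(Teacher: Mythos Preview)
Your proposal is correct and follows essentially the same route as the paper: split $\xan\cong\Spf\O[[I_K^{\ab}]]\times\widehat{\mathbf{G}}_m$ via a chosen geometric Frobenius, build the inertial piece over $\O[[I_K^{\ab}]]$ by passing to finite Artinian quotients and invoking Dee's equivalence \eqref{Dee}, then tensor with the universal unramified character and appeal to Lemma~\eqref{universal unramified} for the Artinian compatibility. Your write-up is more explicit about the cofinal system $(\varpi^n,\fka^N)$, the passage from local to general Artinian coefficients, and the identity $\X_1(R_0)=\varprojlim\X_1(R_0/(\varpi^n,\fka^N))$, but these are exactly the details the paper leaves implicit in the sentence ``As $\O[[I_K^{\ab}]]$ is the inverse limit of all such quotients $A$, we obtain a map $\Spf\O[[I_K^{\ab}]]\to\X_1$.''
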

\begin{proof}
Fix a geometric Frobenius $\sigma\in G_K$, and hence an isomorphism $\xan\cong \Spf \O[[I_K^{\mathrm{ab}}]]\times\widehat{\mathbf{G}}_m$. For each finite Artinian quotient $A$ of $\O[[I_K^{\mathrm{ab}}]]$, we extend the map $I_K^{\mathrm{ab}}\to A^\times$ to a character $G_K\to A^\times$ by taking $\sigma$ to $1$. Under Dee's equivalence \eqref{Dee}, this gives rise to a rank one \' etale $(\varphi,\Gamma)$-module with $A$-coefficients, i.e. an object of $\X_1(A)$. As $\O[[I_K^{\mathrm{ab}}]]$ is the inverse limit of all such quotients $A$, we obtain a map $\Spf \O[[I_K^{\mathrm{ab}}]]\to \X_1$. We can now define $\xan\to \X_1$ as the composite 
\begin{displaymath}
\xan\cong \Spf \O[[I_K^{\mathrm{ab}}]]\times\widehat{\mathbf{\mathbf{G}}}_m\to \X_1\times \widehat{\mathbf{G}}_m\to \X_1,    
\end{displaymath}
where the last map is given by taking tensor product with the universal unramified character $\ur{x}$. That the map is compatible with Dee's equivalence in case of Artinian coefficients follows immediately from construction and Lemma \eqref{universal unramified}.
\end{proof}
It is easy to check that the construction $\delta\mapsto \aka(\delta)$ is independent on our choice of $\sigma$ (see also Remark \eqref{independent of frob} below), and that $\aka(\delta_1\delta_2)\cong \aka(\delta_1)\otimes_{\aka}\aka(\delta_2)$ for all $\delta_i$. Of course, in case $\delta=\ur{a}$ is an unramified character, this agrees with the notation introduced earlier. For a $(\varphi,\Gamma)$-module $M$ with $A$-coefficients, we set $M(\delta):=M\otimes_{\aka}\aka(\delta)$, equipped with the obvious (diagonal) $(\varphi,\Gamma)$-structure.
\section{Explicit descriptions of the rank one stacks}
\subsection{Statement}
Our main result in this text is the following.
\begin{thm}\label{main prop}
Let $A$ be a $\varpi$-adically complete $\O$-algebra. Let $M$ be a rank one \' etale $(\varphi,\Gamma)$-module with $A$-coefficients. Then there exist a unique continuous character $\delta: W_K\to A^\times$ and a unique (up to isomorphism) invertible $A$-module $L$, such that $M\cong \aka(\delta)\otimes_AL$.
\end{thm}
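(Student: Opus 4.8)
The plan is to reduce the statement over a general $\varpi$-adically complete $\O$-algebra $A$ to the already-understood case of finite Artinian coefficients, using a limit argument together with descent for the invertible module $L$. First I would observe that, since $M$ is étale of rank one, the underlying $\ak{A}$-module is locally free of rank one; after inverting a suitable element I may pretend it is free, but in general I should carry along an invertible $A$-module $L$ from the start, so write $M \cong M_0 \otimes_A L$ where $M_0$ is a free rank one $(\varphi,\Gamma)$-module with $A$-coefficients, i.e. $M_0 = \ak{A}\cdot v$ for a basis $v$. The task is then to show that the $(\varphi,\Gamma)$-action on $M_0$ is given by a character $\delta : W_K \to A^\times$, i.e. that $M_0 \cong \aka(\delta)$ for a (unique) such $\delta$; the factor $L$ and its uniqueness up to isomorphism follow formally since $\aka(\delta)$ is free and $\ak{A}$ is (faithfully flat over $A$, with) $\operatorname{Pic}$ detecting $L$.

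For the core claim, I would first treat $A$ Noetherian and $\varpi$-adically complete by writing $A = \varprojlim_n A/\varpi^n A$ and each $A/\varpi^n A$ as an inverse limit of its finite Artinian quotients (using that $A/\varpi^n A$ is Noetherian, hence a finite product/limit of Artinian pieces along its ideal filtration). Over each finite Artinian local $\O$-algebra $A'$, Dee's equivalence \eqref{Dee} turns the rank one étale $(\varphi,\Gamma)$-module $M_0 \otimes_A A'$ into a rank one continuous representation of $G_K$, which factors through $G_K^{\mathrm{ab}}$; composing with local class field theory and restricting to the Weil group, this is a character $\delta_{A'} : W_K \to (A')^\times$, and by Lemma \eqref{delta-->aka(delta)} we have $M_0 \otimes_A A' \cong \aka(\delta_{A'})$. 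These characters are compatible under the transition maps (again by Dee and by the naturality in Lemma \eqref{delta-->aka(delta)}), so they glue to a character $\delta : W_K \to A^\times$ valued in $\varprojlim (A')^\times = A^\times$ — here one uses that $W_K$ is topologically finitely generated, so continuity and the limit are compatible — and the isomorphisms $M_0 \otimes_A A' \cong \aka(\delta \bmod \ldots)$ assemble, by $\varpi$-adic completeness and Nakayama-type arguments, to an isomorphism $M_0 \cong \aka(\delta)$.

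To pass from Noetherian $A$ to a general $\varpi$-adically complete $\O$-algebra $A$, I would use that the datum of $M$ — a finitely generated $\ak{A}$-module with $\varphi$ and $\Gamma$ actions satisfying the étale condition — is defined over a Noetherian subalgebra $A_1 \subseteq A$ (finitely generated over $\O$, hence excellent, and we may replace it by its $\varpi$-adic completion); concretely, the finitely many structure constants of $\varphi$, of topological generators of $\Gamma$, and the inverses witnessing étaleness and invertibility of $L$ all live in such an $A_1$. Applying the Noetherian case to $A_1$ gives $\delta_1 : W_K \to A_1^\times$ and $L_1$ with $M \otimes_{A_1} \text{(its version)} \cong \aka(\delta_1)\otimes_{A_1} L_1$; base-changing along $A_1 \to A$ yields $\delta := \delta_1$ composed with $A_1^\times \to A^\times$ and $L := L_1 \otimes_{A_1} A$. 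Uniqueness of $\delta$ and of $L$ (up to isomorphism) then follows by reduction mod $\varpi$ and the Artinian case: two characters inducing isomorphic $(\varphi,\Gamma)$-modules agree modulo every $\varpi^n$ after reduction to Artinian quotients, hence agree, and $L$ is recovered as $\Hom_{(\varphi,\Gamma)}(\aka(\delta), M)$.

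The main obstacle I anticipate is the gluing/limit step over Artinian quotients: one must check that the characters $\delta_{A'}$ are genuinely compatible (this needs the precise compatibility of $\ur{x}$ with Dee's construction, cf. the footnote in the excerpt, and Lemma \eqref{delta-->aka(delta)}), that the inverse limit of the $(A')^\times$ really is $A^\times$ with the correct topology so that $\delta$ is continuous, and — most delicately — that the individual isomorphisms $M_0\otimes_A A' \xrightarrow{\sim} \aka(\delta_{A'})$ can be chosen compatibly so as to produce an isomorphism in the limit; a priori each is only unique up to $(A')^\times$, so one has to rigidify, e.g. by fixing the image of the basis $v$ and solving the resulting $1$-cocycle condition (a lim$^1$-type vanishing, which holds because $\varprojlim$ of the surjective systems $(A')^\times$ has vanishing $\varprojlim^1$ along a countable cofinal chain). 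Once that is in place, descent to a Noetherian subalgebra and the uniqueness assertions are routine.
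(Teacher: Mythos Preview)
Your plan has a genuine gap at its core step: a Noetherian $\varpi$-adically complete $\O$-algebra is \emph{not} in general an inverse limit of its finite Artinian quotients. Take $A=\mathbf{F}[x]$ (a finite type $\mathbf{F}$-algebra, so certainly a legitimate test object); its Artinian quotients are the $\mathbf{F}[x]/(f)$, and no inverse system of these recovers $A$. More to the point, a character $\delta:W_K\to \mathbf{F}[x]^\times=\mathbf{F}^\times$ is necessarily constant in $x$, whereas rank one \'etale $(\varphi,\Gamma)$-modules over $\mathbf{F}[x]$ certainly vary---so the ``glue characters over Artinian quotients'' strategy cannot even produce the correct $\delta$. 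The statement ``$A/\varpi^n A$ is Noetherian, hence a finite product/limit of Artinian pieces along its ideal filtration'' is simply false. This is precisely the difficulty the paper is working around; the introduction even notes that the original argument in \cite{EG20} requires uniform ramification bounds to make a limit-over-Artinian approach go through, and the present paper avoids that entirely.

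The paper's actual route is quite different from yours. Uniqueness is handled as you suggest (reduction to Artinian points), but existence is \emph{not} done by any inverse limit. Instead, Lemma \eqref{reduced is enough} uses Herr-complex deformation theory to reduce to the case where $A$ is a reduced finite type $\mathbf{F}$-algebra---exactly the case your limit argument cannot touch. That case is then settled by geometry of stacks: Lemma \eqref{closed immersion crucial} shows $[\widehat{\mathbf{G}}_m/\widehat{\mathbf{G}}_m]\hookrightarrow \X_1$ is a closed immersion by comparing with the crystalline substack $\X_1^{\mathrm{ur}}$, and then one checks that $(\X_1)_{\mathrm{red}}$ is covered by the (finitely many, disjoint) unramified twists of this closed substack indexed by Serre weights. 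A secondary issue: your opening move ``write $M\cong M_0\otimes_A L$ with $M_0$ free'' presupposes that every invertible $\ak{A}$-module arises from an invertible $A$-module, i.e.\ that $\mathrm{Pic}(A)\to\mathrm{Pic}(\ak{A})$ is surjective, which you have not justified; the paper never needs this, since the line bundle $L$ is \emph{extracted} at the end as $M(\delta^{-1})^{\varphi=1}$ rather than posited at the start.
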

\begin{cor}[{{\cite[Prop. 7.2.17]{EG22}}}]\label{X 1 iso}
The map $\xan\to \X_1, \delta\mapsto \aka(\delta)$ induces an isomorphism 
\begin{displaymath}
    \left[\xan/\widehat{\mathbf{G}}_m\right]\xrightarrow{\sim} \X_1.
\end{displaymath} 
\end{cor}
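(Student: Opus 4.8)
\emph{Strategy and reductions.} The plan is to bootstrap from Dee's equivalence \eqref{Dee} and the construction of $\aka(\delta)$ in Section~2 by an $\m$-adic limit argument; the heart of the matter is a rigidity statement, namely that a rank one \'etale $(\varphi,\Gamma)$-module which becomes trivial modulo every power of the maximal ideal is already trivial. First I would reduce to $A$ Noetherian: since $M$ is finitely presented over $\aka$ and its $\varphi$- and $\Gamma$-actions are specified by finitely many structure constants (using that $\Gamma$ is topologically finitely generated), the $(\varphi,\Gamma)$-module $M$ together with its structure descends to $\ak{A_0}$ for a finitely generated $\O$-subalgebra $A_0\subseteq A$, and hence, after $\varpi$-adic completion, to a Noetherian coefficient ring. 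An isomorphism of $(\varphi,\Gamma)$-modules over $\aka$ may be tested after completing $A$ at each maximal ideal, and the desired data $\delta$ and $L$ glue from their local counterparts ($L$ as an invertible module, $\delta$ as a point of the affine formal scheme representing $\xan$), so one further reduces to $A$ complete local Noetherian with maximal ideal $\m$; then $L$ is automatically free, and it remains to produce a unique continuous character $\delta\colon W_K\to A^\times$ with $M\cong\aka(\delta)$.

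\emph{Artinian coefficients and the limit.} If $A$ is finite Artinian local, then by \eqref{Dee} we have $M\cong\mathbf{D}_A(V)$ for a rank one continuous $A$-representation $V$ of $G_K$; since $\End_A(V)=A$, the group $G_K$ acts through a character $\chi\colon G_K\to A^\times$, and with $\delta:=\chi|_{W_K}$ Lemma~\eqref{delta-->aka(delta)} gives $M\cong\aka(\delta)\otimes_A V$, where $\delta$ and the isomorphism class of $V$ are unique by the full faithfulness of Dee's functor. Applying this to each $M\otimes_{\aka}\ak{A/\m^n}$ and invoking this uniqueness for compatibility, one obtains characters $\delta_n\in\xan(A/\m^n)$ and (necessarily free, as $\operatorname{Pic}$ of a local ring vanishes) modules $L_n\cong A/\m^n$ with $M\otimes_{\aka}\ak{A/\m^n}\cong\ak{A/\m^n}(\delta_n)$. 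Setting $\delta:=\varprojlim\delta_n$ yields a continuous character $W_K\to\varprojlim(A/\m^n)^\times=A^\times$, using that $A$ is $\m$-adically complete.

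\emph{Rigidity (the crux).} It then remains to show that $M':=M\otimes_{\aka}\aka(\delta)^{-1}$, which by construction is trivial modulo $\m^n$ for every $n$, is itself trivial. Since $M'/\m M'$ is free, $M'$ is free over $\aka$ by Nakayama and $\m$-adic completeness of $\aka$ (a consequence of its $\varpi$-adic completeness and the completeness of $A$); fix a basis $e$. For each $n$, let $S_n$ be the set of units $w\in\ak{A/\m^n}^\times$ such that $we$ is fixed by $\varphi$ and by $\Gamma$ in $M'/\m^nM'$. By hypothesis each $S_n$ is nonempty, and it is a torsor under $\ak{A/\m^n}^{\varphi=1,\Gamma=1}=A/\m^n$ (the trivial-coefficient case of Dee's equivalence); consequently the reduction maps $S_{n+1}\to S_n$ are equivariant for the surjection $A/\m^{n+1}\twoheadrightarrow A/\m^n$, hence surjective. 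Thus $\varprojlim S_n\neq\emptyset$, and any element produces $w\in\aka$ (using $\m$-adic completeness again) for which $we$ is a $(\varphi,\Gamma)$-fixed basis of $M'$; as $w$ is a unit modulo $\m$ it is a unit, so $M'\cong\aka$, i.e. $M\cong\aka(\delta)$.

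\emph{Uniqueness and the main obstacle.} For uniqueness, reducing to the case $\aka(\delta)\otimes L\cong\aka$ (using that $\delta\mapsto\aka(\delta)$ is a homomorphism), one takes $(\varphi,\Gamma)$-invariants and base-changes to the residue fields of $A$ to see that $\delta$ is trivial modulo the nilradical, and an infinitesimal argument as in the proof of Lemma~\eqref{universal unramified} then forces $\delta=1$ and $L\cong A$. The delicate point throughout is the rigidity step: one must upgrade a compatible family of trivializations modulo the powers of $\m$ to a single trivialization \emph{without any a priori bound on the ramification of the finite-level characters $\delta_n$}. The torsor structure under $\aka^{\varphi=1,\Gamma=1}=A$ is exactly what makes the inverse limit nonempty, and it is here---together with the unconditional construction of $\aka(\delta)$ in Section~2---that the argument dispenses with the uniform ramification estimates of \cite[\textsection 7.3]{EG20}.
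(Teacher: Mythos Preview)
Your approach is quite different from the paper's, which deduces the corollary formally from Theorem \eqref{main prop} and proves the latter by first reducing (via a deformation-theoretic argument with the Herr complex, Lemma \eqref{reduced is enough}) to \emph{reduced} finite-type $\mathbf{F}$-algebras, and then identifying $(\X_1)_{\mathrm{red}}$ with a disjoint union of copies of $[\mathbf{G}_m/\mathbf{G}_m]$ indexed by Serre weights, using crucially that the crystalline substack $\X_1^{\mathrm{ur}}\hookrightarrow\X_1$ is a closed immersion (Lemma \eqref{closed immersion crucial}). In particular the paper never takes an $\m$-adic limit in a complete local Noetherian coefficient ring.

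There is a genuine gap in your rigidity step. You assert that $\aka$ is $\m$-adically complete when $(A,\m)$ is complete local Noetherian, and you use this twice: to deduce freeness of $M'$ via Nakayama, and to conclude that a compatible system $(w_n)\in\varprojlim_n\ak{A/\m^n}^\times$ comes from an element of $\aka^\times$. But $\aka$ is \emph{not} $\m$-adically complete in general. Concretely, take $A=\mathbf{F}[[x]]$ with $\m=(x)$; then (cf.\ the description in the proof of Lemma \eqref{reduced is enough}) $\aka\cong(k_\infty\otimes_{\mathbf{F}_p}\mathbf{F}[[x]])((T))$, and the partial sums $\sum_{n=1}^N x^nT^{-n}$ form an $x$-adically Cauchy sequence with no limit in $\aka$, since the exponents of $T$ are unbounded below. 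Equivalently, the natural map $\aka\to\varprojlim_n\ak{A/\m^n}$ is a strict inclusion. Your inverse-limit element $w$ lives a priori only in this larger completion, and nothing in the torsor argument forces it into $\aka$; forcing $w\in\aka$ amounts precisely to bounding the $T$-adic poles of the $w_n$ uniformly in $n$, which is exactly the uniform ramification bound of \cite[\S7.3]{EG20} that both you and the paper are trying to avoid. (Relatedly, $\m\aka$ is not contained in the Jacobson radical of $\aka$---for instance $1+xT^{-1}$ is not a unit in the example above---so the Nakayama step for freeness of $M'$ is also unjustified as written.) The paper's route through $\X_1^{\mathrm{ur}}$ and the reduced substack is designed to sidestep exactly this completeness obstruction.
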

\begin{proof}[Proof of Corollary \eqref{X 1 iso}]
Since $\xan$ is endowed with the trivial action of $\widehat{\mathbf{G}}_m$, the quotient stack $[\xan/\widehat{\mathbf{G}}_m]$ is naturally identified with $[\Spf \O/\widehat{\mathbf{G}}_m]\times_{\Spf \O}\xan$. In other words, for any $\varpi$-adically complete $\O$-algebra $A$, its groupoid of $A$-valued points is equivalent to the groupoid of pairs $(L,\delta)$ consisting of an invertible $A$-module $L$, and a character $\delta\in \xan(A)$. Via this identification, the map $\xan\to \X_1$ factors through the map
\begin{align*}
    \left[\xan/\widehat{\mathbf{G}}_m\right] &\to \X_1
\end{align*}
defined by $(L,\delta)\mapsto \aka(\delta)\otimes_AL$. The result now follows from Theorem \eqref{main prop}, and the fact that the automorphism group of any rank one \' etale $(\varphi,\Gamma)$-module is given simply by the scalars:
\begin{displaymath}
\mathrm{Aut}_{\aka,\varphi,\Gamma}(M)=((M\otimes M^\vee)^{\varphi,\Gamma=1})^\times=(\aka^{\varphi,\Gamma=1})^\times=A^\times;
\end{displaymath}
here we have used \cite[Lem. 2.2.19 and Prop. 2.2.12]{EG22} for the last equality.
\end{proof}
\subsection{Proof}
This subsection is devoted to proving Theorem \eqref{main prop}. In order to streamline the arguments, we postpone the proof of one key result (Lemma \eqref{closed immersion crucial}) to $\mathsection$\ref{proof of crucial} below.

We begin with the uniqueness statement.
\begin{lem}\label{uniqueness}
The uniqueness part of Theorem \eqref{main prop} holds.
\end{lem}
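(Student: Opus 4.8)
The plan is to split the two uniqueness assertions, deducing uniqueness of the invertible module $L$ formally from uniqueness of the character $\delta$, and proving the latter by reducing to the finite Artinian local case, where Dee's equivalence (Theorem \eqref{Dee}) and Lemma \eqref{delta-->aka(delta)} apply directly.

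For the module: suppose $\aka(\delta)\otimes_A L\cong\aka(\delta)\otimes_A L'$ as $(\varphi,\Gamma)$-modules (same $\delta$). Tensoring with $\aka(\delta)^\vee$ yields $\aka\otimes_A L\cong\aka\otimes_A L'$ as $(\varphi,\Gamma)$-modules, with $\varphi$ and $\Gamma$ acting through the first factor. Applying $(-)^{\varphi,\Gamma=1}$ and using $(\aka)^{\varphi,\Gamma=1}=A$ (by \cite[Lem. 2.2.19, Prop. 2.2.12]{EG20}, as in the proof of Corollary \eqref{X 1 iso}) --- the formation of these invariants being compatible with tensoring by the invertible module $L$, as one checks after inverting elements of $A$ --- recovers $L$ on one side and $L'$ on the other, so $L\cong L'$. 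Equivalently, $L\cong\Hom_{\aka,\varphi,\Gamma}(\aka(\delta),M)$, which depends only on $M$ and the already-pinned-down $\delta$.

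It remains to show $\delta=\delta'$ whenever $\aka(\delta)\otimes_A L\cong\aka(\delta')\otimes_A L'$. This is the pointwise equality $\delta(g)=\delta'(g)$ in $A^\times$ for every $g\in W_K$; as $A$ is $\varpi$-adically separated it may be tested modulo $\varpi^n$ for each $n$, after which a standard limit argument (based on $\X_1$ being limit preserving, cf. \cite{EG20}) reduces us to the case where $A$ is a finite Artinian local $\O$-algebra. There every invertible $A$-module is free, so the hypothesis becomes an isomorphism $\aka(\delta)\cong\aka(\delta')$ of rank-one (hence finite projective) \'etale $(\varphi,\Gamma)$-modules. Applying Dee's functor $T_A$ turns this into an isomorphism of rank-one $W_K$-representations over $A$, which by Lemma \eqref{delta-->aka(delta)} are precisely $\delta$ and $\delta'$ viewed as characters $W_K\to A^\times$; comparing the two actions on a basis vector gives $\delta(g)=\delta'(g)$ for all $g$.

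The step I expect to be the main obstacle is the reduction to the finite Artinian local case; everything else is formal. If one prefers to avoid invoking limit-preservingness, one can argue by hand: following the pattern of the proof of Lemma \eqref{universal unramified}, first reduce modulo $\varpi$ and compare the $T$-adic leading terms of a generator of $\aka(\delta(\delta')^{-1})$ to force $\delta(\delta')^{-1}\equiv 1\pmod\varpi$, and then lift along square-zero thickenings up the $\varpi$-adic filtration. Either way, once one is over a finite Artinian local ring Dee's theorem does all the work, so that --- in contrast with \cite[\textsection 7.3]{EG20} --- no uniform bound on the ramification of families of characters is needed.
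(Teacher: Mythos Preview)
Your argument is correct and follows the same overall strategy as the paper: recover $L$ as the $(\varphi,\Gamma)$-invariants (equivalently $\varphi$-invariants) of $M(\delta^{-1})$, and prove uniqueness of $\delta$ by reducing to finite Artinian local coefficients where Dee's equivalence applies. The difference lies in how the reduction is carried out. The paper reformulates the statement geometrically---the map $\xan\times_{\X_1}\xan\to\xan\times_{\Spf\O}\xan$ factors through the diagonal---and then invokes \cite[Lem.~7.1.14]{EG20}, carefully verifying its hypotheses (the map is representable by algebraic spaces and of finite presentation, being a base change of the diagonal of $\X_1$, hence limit preserving on objects). You instead argue pointwise, testing $\delta(g)=\delta'(g)$ modulo $\varpi^n$ and then descending.

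One point to tighten: invoking ``$\X_1$ limit preserving'' only descends the $(\varphi,\Gamma)$-modules, the characters, and the isomorphism to a \emph{finite type} $\O/\varpi^n$-subalgebra $A_0$, which is Noetherian but not Artinian. To finish you must add the (routine) observation that an element of a Noetherian ring is detected by its Artinian local quotients $A_0/\m^N$ (Krull's intersection theorem together with $A_0\hookrightarrow\prod_\m (A_0)_\m$), and that your isomorphism base-changes along $A_0\to A_0/\m^N$; over such a quotient the invertible modules become free and Dee's theorem applies. The paper's appeal to \cite[Lem.~7.1.14]{EG20} packages exactly this passage from ``limit preserving'' to ``checkable on Artinian points,'' so what each approach buys is clear: yours is more self-contained, the paper's is shorter once the lemma is on the shelf. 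Your alternative sketch via $T$-adic leading terms would need substantially more detail to stand on its own and is unnecessary given the above.
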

\begin{proof}
As $(\aka)^{\varphi=1}=A$, we necessarily have $L\cong M(\delta^{-1})^{\varphi=1}$. It remains to show that if $\aka(\delta)\cong \aka(\delta')$ as $(\varphi,\Gamma)$-modules, then $\delta=\delta'$. Using that $\xan$ and $\X_1$ are both limit preserving (this is easy for $\xan$ by its definition; for $\X_1$, see \cite[Lem. 3.2.19]{EG22}), we may assume that $A$ is a finite type $\O/\varpi^a$-algebra for some $a\geq 1$. In this case, $A$ embeds naturally into the product of its Artinian quotients, and so we may assume further that $A$ is a finite Artinian $\O$-algebra. The lemma now follows since the map $\delta\mapsto \aka(\delta)$ recovers the equivalence between rank one \' etale $(\varphi,\Gamma)$-modules and Galois characters for Artinian coefficients (Lemma \eqref{delta-->aka(delta)}).
\end{proof}
\begin{remark}\label{monomorphism}
By Lemma \eqref{uniqueness} and the fact that the automorphism group of any rank one $(\varphi,\Gamma)$ module is simply $\widehat{\mathbf{G}}_m$, we see that the map $[\xan/\widehat{\mathbf{G}}_m]\hookrightarrow\X_1$ is at least a monomorphism. Showing that it is in fact essentially
surjective (i.e. an isomorphism) is equivalent to showing the existence part of Theorem \eqref{main prop}.
\end{remark}
The next lemma allows us to reduce to the case where our test object $A$ is a reduced $\mathbf{F}$-algebra. 
\begin{lem}\label{reduced is enough}
If Theorem \eqref{main prop} holds for reduced finite type $\mathbf{F}$-algebras $A$, then it holds for any $\varpi$-adically complete $\O$-algebra $A$.
\end{lem}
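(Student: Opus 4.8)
The plan is to perform a sequence of three standard reductions. \emph{First}, I would reduce from a general $\varpi$-adically complete $\O$-algebra $A$ to the case where $A$ is $\varpi$-adically complete \emph{and Noetherian}. Since $\ak{A}=\mathbf{A}_{K}\widehat{\otimes}_{\O}A$ and the data of a rank one \'etale $(\varphi,\Gamma)$-module involves only finitely many elements of $A$ (the matrix entries of $\varphi$ and of a topological generator $\gamma\in\Gamma$, together with the entries of their inverses, and the finitely many relations among them coming from the $(\varphi,\Gamma)$-module axioms and the topological constraints), one can write $A$ as a filtered colimit of $\varpi$-adically complete Noetherian $\O$-subalgebras over which $M$ is already defined; here I would invoke the limit-preserving properties of $\X_1$ recorded in the proof of Lemma \ref{uniqueness} (namely \cite[Lem. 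A.2]{EG20} and \cite[Prop. 3.2.17]{EG20}), together with the corresponding property of $\xan$, to descend both the character $\delta$ and the line bundle $L$. \emph{Second}, having reduced to $A$ Noetherian, I would pass to the $\varpi$-adic completion and then reduce modulo powers of $\varpi$: it suffices to produce compatible $\delta_n\colon W_K\to (A/\varpi^n)^\times$ and invertible $A/\varpi^n$-modules $L_n$ with $M/\varpi^n M\cong \ak{A/\varpi^n}(\delta_n)\otimes_{A/\varpi^n}L_n$, and then take the inverse limit — the uniqueness from Lemma \ref{uniqueness} guarantees the $\delta_n$ and $L_n$ are compatible, and completeness of $A$ lets us glue. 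This replaces $A$ by a Noetherian $\O/\varpi^n$-algebra, i.e.\ (after the usual d\'evissage up the $\varpi$-adic filtration, which I discuss next) ultimately by a Noetherian $\mathbf{F}$-algebra.

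\emph{Third} — the heart of the reduction — I would handle the passage from a Noetherian $\mathbf{F}$-algebra to a \emph{reduced} one by Noetherian induction on $\Spec A$ combined with deformation theory along square-zero thickenings. Writing $A_{\mathrm{red}}=A/\mathfrak n$ with $\mathfrak n$ the nilradical, I filter $\mathfrak n$ by a chain of ideals with square-zero successive quotients, so it suffices to treat a square-zero extension $0\to I\to A\to \bar A\to 0$ assuming the result over $\bar A$. Given $M$ over $\ak{A}$, by hypothesis $M\otimes_{\ak A}\ak{\bar A}\cong \ak{\bar A}(\bar\delta)\otimes_{\bar A}\bar L$; after replacing $M$ by $M\otimes_A\bar L^{-1}$ (lifting $\bar L^{-1}$ to an invertible $A$-module, possible since $\mathrm{Pic}(A)\cong\mathrm{Pic}(\bar A)$ for a square-zero extension), we may assume $M\otimes_{\ak A}\ak{\bar A}\cong\ak{\bar A}(\bar\delta)$. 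Now I lift $\bar\delta$ to a set-theoretic, and then honest continuous, character $\delta\colon W_K\to A^\times$ — this is possible because the obstruction lives in $\H^2(W_K,I)$ with $I$ a module over the $\mathbf{F}$-algebra $A$, and here one should exploit that $W_K$ (or rather the relevant quotient through which everything factors, together with the explicit structure $I_K^{\mathrm{ab}}\times\mathbf{Z}$) has cohomological dimension behaving well for $p$-torsion coefficients over $\mathbf{F}$-algebras, or more simply that characters lift along square-zero extensions since $A^\times\to\bar A^\times$ is surjective with kernel $1+I\cong I$ and one lifts the value on a Frobenius freely and the values on $I_K^{\mathrm{ab}}$ using that $\O[[I_K^{\mathrm{ab}}]]$ is formally smooth-enough / that $1+I$ is a uniquely divisible-by-prime-to-$p$ and $p$-... — in any case this is the one genuinely delicate point. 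Having lifted to $\delta$, the $(\varphi,\Gamma)$-module $N:=M\otimes_{\ak A}\ak A(\delta^{-1})$ satisfies $N\otimes_{\ak A}\ak{\bar A}\cong\ak{\bar A}$, i.e.\ $N$ is a deformation of the trivial rank one module along $I$; such deformations are classified by a cohomology group which I claim vanishes (or is absorbed into the choice of $\delta$ and $L$), so that $N\cong\ak A(\delta')\otimes_A L$ for some $\delta'\equiv 1\pmod I$ and invertible $L$, giving $M\cong\ak A(\delta\delta')\otimes_A L$ as desired.

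\emph{Main obstacle.} I expect the crux to be the third step, specifically controlling the deformation theory of rank one \'etale $(\varphi,\Gamma)$-modules along square-zero thickenings of $\mathbf{F}$-algebras: one must show that every such deformation of $\ak{\bar A}(\bar\delta)$ is again of the form $\ak A(\delta)\otimes_A L$, which amounts to identifying the relevant tangent/obstruction spaces with group-cohomology of $W_K$ and checking the lifting of characters goes through. It is precisely this input that, in \cite{EG20}, required the uniform ramification bounds of \cite[\textsection 7.3]{EG20}; the point of the present approach is that once one has reduced to \emph{reduced} finite-type $\mathbf{F}$-algebras (the hypothesis of this lemma), the classification can be obtained by a direct argument with $(\varphi,\Gamma)$-modules over such rings, so the deformation-theoretic bookkeeping here only needs to bridge from the nilpotent to the reduced case and can afford to be soft.
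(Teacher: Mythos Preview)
Your overall architecture matches the paper's exactly: reduce to finite type via limit-preserving of $\X_1$, reduce modulo $\varpi^a$, then induct on the nilpotency index of the nilradical via square-zero thickenings, at each stage lifting $\bar\delta$ and $\bar L$ and twisting so that $M_{\bar A}$ becomes trivial. Two comments, one minor and one substantive.

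\emph{Minor.} Lifting $\bar\delta$ to $\delta\in\xan(A)$ is not a delicate point at all, and there is no need to contemplate $H^2(W_K,I)$. The functor $\xan$ is represented by $\Spf\O[[I_K^{\mathrm{ab}}]]\times\widehat{\mathbf{G}}_m$, which is formally smooth over $\Spf\O$, so characters lift for free along any nilpotent thickening. The paper simply asserts this without comment.

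\emph{Substantive.} The genuine gap is exactly where you flag it: once $M_{\bar A}$ is trivial, you must show that every deformation of the trivial rank one $(\varphi,\Gamma)$-module along the square-zero ideal $I$ comes from a character in $\mathrm{Hom}_{\mathrm{cts}}(W_K,1+I)$ (the line bundle $L$ plays no role here, since $\mathrm{Pic}$ is unchanged under square-zero thickenings). You phrase this as ``identifying the relevant tangent/obstruction spaces with group cohomology of $W_K$'', but you give no mechanism for actually doing so, and a direct computation of Herr cohomology $H^1(\C^\bullet(\ak{I}))$ for general $I$ is not straightforward. The paper's key trick, which you are missing, is this: the deformation space $H^1(\C^\bullet(\ak{I}))$ depends only on $I$ as an $\O$-module, not on the ambient ring $A$. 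Hence one may base-change along $(\O/\varpi^a)[I]\to A$ (the ``universal'' square-zero extension with ideal $I$) without changing either side of the comparison. Now write $I=\varinjlim_n I_n$ as a filtered colimit of finite $\O$-submodules; since $\X_1$ is limit preserving, one reduces to $I$ finite over $\O$, i.e.\ to $A=(\O/\varpi^a)[I]$ a finite Artinian local $\O$-algebra. At that point Dee's equivalence \eqref{Dee} applies directly and the bijection $\mathrm{Hom}_{\mathrm{cts}}(W_K,1+I)\xrightarrow{\sim} H^1(\C^\bullet(\ak{I}))$ is immediate. This reduction to Artinian coefficients is the whole content of the lemma, and your proposal stops just short of it.
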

\begin{proof}
Let $A$ be an $\O/\varpi^a$-algebra for some $a\geq 1$, and let $M$ be a rank one \' etale $(\varphi,\Gamma)$-module with $A$-coefficients. We want to show that $M\cong \aka(\delta)\otimes_A L$ for some $\delta$ and $L$. As $\X_1$ is limit preserving, we may assume $A$ is of finite type over $\O/\varpi^a$. We will induct on the nilpotency index $e$ of the nilradical $A^{\circ\circ}$. The case $e=1$ is just our assumption. Assume now that $e\geq 2$. Let $I:=(A^{\circ\circ})^{e-1}$ and $\bar{A}:=A/I$. By the induction hypothesis, $M_{\bar{A}}:=M\otimes_A\bar{A}$ has the form $\ak{\bar{A}}(\bar{\delta})\otimes_{\bar{A}}\tilde{L}$ for some character $\overline{\delta}$ and some invertible $\bar{A}$-module $\tilde{L}$. Lifting $\bar{\delta}$ to a character $\delta\in\xan(A)$, $\tilde{L}$ to an invertible $A$-module $L$ (recall that finite projective modules always deform uniquely through nilpotent thickenings), and replacing $M$ with $M(\delta^{-1})\otimes_AL^\vee$, we may assume that $M_{\bar{A}}$ is trivial. By \cite[Prop. 5.1.33]{EG22}, the set of isomorphism classes of such $M$ is given by $H^1(\C^{\bullet}(\ak{I}))$, where $\C^\bullet(\ak{I})$ is the Herr complex of $\ak{I}:=I\ak{A}\cong (W(k_\infty)\otimes_{\mathbf{Z}_p}I)((t))$. Namely, given such $M$, there is an $\aka$-basis $v$ of $M$ so that $\varphi(v)=fv$ and $\gamma(v)=gv$ for some $f,g\in \ker((\aka)^\times\to (\ak{\bar{A}})^\times)=1+\ak{I}$, where $\gamma$ is a fixed topological generator of $\Gamma_K\cong\mathbf{Z}_p$. The corresponding cohomology class is then given by $[(f-1,g-1)]$.  

Let $(\O/\varpi^a)[I]$ be the usual square-zero thickening defined by $I$. Using the above description in terms of $H^1(\C^\bullet(\ak{I}))$, we see that $M$ arises as the base change of some rank one $(\varphi,\Gamma)$-module with $(\O/\varpi^a)[I]$-coefficients via the natural map $(\O/\varpi^a)[I]\to A$. Thus we may reduce to the case $A=(\O/\varpi^a)[I]$. By writing $I$ as the colimit of its finite sub-$\O$-modules and using again the fact that $\X_1$ is limit preserving, we may assume further that $I$ is finite over $\O$. But in this case $(\O/\varpi^a)[I]$ is a finite Artinian $\O$-algebra, so we are done by using (again) the fact that the construction $\delta\mapsto \aka(\delta)$ recovers the equivalence between Galois representations and $(\varphi,\Gamma)$-modules for Artinian coefficients.
\end{proof}
\begin{lem}\label{closed immersion crucial}
The map $\ur{x}: \widehat{\mathbf{G}}_m\to \X_1$ induces a closed immersion $[\widehat{\mathbf{G}}_m/\widehat{\mathbf{G}}_m]\hookrightarrow \X_1$.
\end{lem}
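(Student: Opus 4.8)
The plan is to prove that $j\colon[\widehat{\mathbf{G}}_m/\widehat{\mathbf{G}}_m]\to\X_1$ is a proper monomorphism, and therefore a closed immersion. Several of the needed properties are cheap. First, $j$ is a monomorphism: it factors as $[\widehat{\mathbf{G}}_m/\widehat{\mathbf{G}}_m]\hookrightarrow[\xan/\widehat{\mathbf{G}}_m]\to\X_1$, where the first arrow comes from the closed immersion $\widehat{\mathbf{G}}_m=\{1\}\times\widehat{\mathbf{G}}_m\hookrightarrow\xan$ (the augmentation $\Spf\O\hookrightarrow\Spf\O[[I_K^{\mathrm{ab}}]]$ times the identity, under the chosen isomorphism $\xan\cong\Spf\O[[I_K^{\mathrm{ab}}]]\times\widehat{\mathbf{G}}_m$) by passing to quotients by the trivial $\widehat{\mathbf{G}}_m$-actions, and the second arrow is a monomorphism by Remark \eqref{monomorphism}. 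Being a monomorphism, $j$ is separated and representable by algebraic spaces, and it is of finite type since $[\widehat{\mathbf{G}}_m/\widehat{\mathbf{G}}_m]$ is of finite type over $\Spf\O$ while $\X_1\to\Spf\O$ is quasi-separated. So it remains to show $j$ is universally closed. Writing $\X_1=\varinjlim_a\X_{1,a}$ with $\X_{1,a}:=\X_1\times_{\Spf\O}\operatorname{Spec}(\O/\varpi^a)$, and noting $[\widehat{\mathbf{G}}_m/\widehat{\mathbf{G}}_m]\times_{\X_1}\X_{1,a}=[\mathbf{G}_{m}/\mathbf{G}_m]_{\O/\varpi^a}$, it suffices that each resulting morphism $j_a$ of algebraic stacks (of finite type over the Noetherian ring $\O/\varpi^a$, by cancellation) is a closed immersion; and since $j_a$ is a separated monomorphism of finite type, it is a closed immersion as soon as it is universally closed, which can be checked by the valuative criterion over complete discrete valuation rings.

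Thus everything comes down to the following assertion, whose proof is the real content. Let $R$ be a complete discrete valuation ring that is an $\O/\varpi^a$-algebra — necessarily an $\mathbf{F}$-algebra, since a domain receiving a map from $\O/\varpi^a$ kills the nilpotent $\varpi$ — with fraction field $F$, and let $M$ be a rank one étale $(\varphi,\Gamma)$-module over $\ak{R}$ such that $M\otimes_RF$ is unramified. Then $M$ is unramified, i.e. of the form $\ak{R}(\ur{a'})\otimes_RL$; the lift of the corresponding point to $[\widehat{\mathbf{G}}_m/\widehat{\mathbf{G}}_m]$ is then automatically unique because $j$ is a monomorphism.

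To prove this I would reduce to Artinian coefficients and invoke Dee's equivalence. For each $n$, the ring $R_n:=R/\varpi_R^n$ ($\varpi_R$ a uniformizer of $R$) is a finite Artinian local $\O$-algebra, and $M_n:=M\otimes_RR_n$ corresponds under Theorem \eqref{Dee} to a continuous character $\chi_n\colon G_K\to R_n^\times$. These are compatible, hence assemble to a continuous character $\chi\colon G_K\to R^\times$, i.e. an element of $\xan(R)$. Granting that $M\cong\ak{R}(\chi)$ (see the obstacle below), base change along $R\hookrightarrow F$ — compatible with the construction $\delta\mapsto\ak{}(\delta)$, which is a morphism of stacks — gives $M\otimes_RF\cong\ak{F}(\chi_F)$ with $\chi_F\colon G_K\to F^\times$ the pushforward of $\chi$. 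Since $M\otimes_RF$ is also unramified, the uniqueness statement of Lemma \eqref{uniqueness} (valid over any $\varpi$-adically complete $\O$-algebra, in particular $F$) forces $\chi_F$ to be unramified, i.e. $\chi_F|_{I_K}=1$. But $R\hookrightarrow F$ is injective, so $\chi|_{I_K}$ already takes values in $\ker(R^\times\to F^\times)=\{1\}$; hence $\chi$ is unramified and $M\cong\ak{R}(\chi)=\ak{R}(\ur{a'})$ for a suitable $a'\in R^\times$, as desired.

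The main obstacle is precisely the step $M\cong\ak{R}(\chi)$ — reconstructing $M$ from the characters $\chi_n$ of its Artinian reductions. Equivalently, one must show that a rank one étale $(\varphi,\Gamma)$-module over $\ak{R}$ all of whose reductions modulo $\varpi_R^n$ are trivial is itself trivial; this is delicate because $\ak{R}$ need not be $\varpi_R$-adically complete, so one cannot simply pass to the limit. I would attack it using the explicit description of such (iterated square-zero) deformations by the cohomology $H^1$ of Herr complexes \cite[Prop. 5.1.33]{EG20}, together with the fact that $\X_1$ is limit-preserving (which lets one replace $R$ by its finite sub-$\O$-modules in the relevant colimits, as in the proof of Lemma \eqref{reduced is enough}). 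This is the point at which the argument must do genuine work in place of the uniform bounds on the ramification of families of characters used in \cite[\textsection 7.3]{EG20}; everything else — the reduction of the first paragraph, and the elementary ring-theoretic facts about $\ak{R}$ and $\ak{F}$ invoked above — is routine.
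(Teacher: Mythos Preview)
Your reduction to the valuative criterion is clean, but the ``main obstacle'' you flag is a genuine gap, and your proposed attack does not close it. The analogy with Lemma \eqref{reduced is enough} breaks down: there one has a \emph{single} square-zero thickening with kernel $I$, and writing $I=\varinjlim I_n$ expresses the ring $(\O/\varpi^a)[I]$ itself as a \emph{filtered colimit} of Artinian $\O$-algebras, which is exactly what the limit-preserving property of $\X_1$ consumes. Your DVR $R$, by contrast, is a cofiltered \emph{limit} $R=\varprojlim R_n$ of its Artinian quotients, and limit-preserving gives no control in that direction; since $\ak{R}\cong(k_\infty\otimes_{\mathbf{F}_p}R)((T))$ is genuinely not $\varpi_R$-adically complete, a compatible system of trivializations of the $M_n$ has no reason to assemble to one of $M$. (There is also a smaller issue upstream: if $R$ has infinite residue field --- which the valuative criterion does not exclude --- then the $R_n$ are not finite over $\O$, so Theorem \eqref{Dee} as stated does not even produce the characters $\chi_n$.) What you are effectively asking for at this step --- that every rank one $(\varphi,\Gamma)$-module over a characteristic-$p$ DVR arises from a character --- is already a special case of Theorem \eqref{main prop}, and in the paper's architecture the present lemma is an \emph{input} to that theorem, so your route is close to circular.

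The paper sidesteps this entirely by importing structure from \cite{EG20} rather than computing with $\ak{R}$. It invokes the closed $\O$-flat substack $\X_1^{\mathrm{ur}}\hookrightarrow\X_1$ of crystalline representations of Hodge--Tate weight $0$ built in \cite[Thm.~4.8.12]{EG20}, shows (by checking on finite flat $\O$-algebras via \cite[Lem.~4.8.6]{EG20}) that $[\widehat{\mathbf{G}}_m/\widehat{\mathbf{G}}_m]\to\X_1$ factors through it, and then proves the resulting monomorphism $[\widehat{\mathbf{G}}_m/\widehat{\mathbf{G}}_m]\hookrightarrow\X_1^{\mathrm{ur}}$ is an isomorphism using \cite[Lem.~7.2.6]{LHM}. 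That last step needs $\X_1^{\mathrm{ur}}$ to be analytically unramified, which follows (Lemmas \eqref{unramified reduced}, \eqref{x1 ur unramified}) from Kisin's theorem that crystalline deformation rings are reduced \cite[Thm.~3.3.8]{Kisin08}. So the ``genuine work'' replacing the ramification bounds of \cite[\S 7.3]{EG20} is the deformation-theoretic input about crystalline stacks, not a direct limit argument over $\ak{R}$.
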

The proof of Lemma \eqref{closed immersion crucial} takes up $\mathsection$\ref{proof of crucial} below. Armed with this crucial result, we now finish our proof of Theorem \eqref{main prop}.
\begin{proof}[Proof of Theorem \eqref{main prop}]
It remains to show that the monomorphism $[\xan/\widehat{\mathbf{G}}_m]\hookrightarrow \X_1$ is an isomorphism. In view of Lemma \eqref{reduced is enough}, it suffices to show that the induced map between underlying reduced substacks
\begin{displaymath}
\left[\xan/\widehat{\mathbf{G}}_m\right]_{\mathrm{red}}\hookrightarrow (\X_1)_{\mathrm{red}}
\end{displaymath}
is an isomorphism. As our stacks will all live over $\Sp \mathbf{F}$ in the rest of this proof, we will drop the subscript $\mathbf{F}$ for ease of notation. For each ``Serre weight'' $\delta:I_K^{\mathrm{ab}}\to \mathbf{F}^\times$ (recall that $\mathbf{F}$ is assumed to contain $k$), we abusively denote also by $\delta$ a fixed choice of an extension of it to $G_K$. By twisting $\delta: \Sp \mathbf{F}\to \xan$ by unramified characters, we obtain a map $\mathbf{G}_{m}\to \xan$. The induced map $\coprod_{\delta} \mathbf{G}_{m}\to (\xan)_{\mathrm{red}}$ is then easily seen to be an isomorphism. In particular, we have an isomorphism $\coprod_{\delta} [\mathbf{G}_{m}/\mathbf{G}_m]\xrightarrow{\sim} [\xan/\widehat{\mathbf{G}}_m]_{\mathrm{red}}$. Thus, it suffices to show that the map 
\begin{displaymath}
\coprod_{\delta} \left[\mathbf{G}_{m}/\mathbf{G}_m\right]\hookrightarrow (\X_1)_{\mathrm{red}}
\end{displaymath}
is an isomorphism. Of course, by construction the component $[\mathbf{G}_m/\mathbf{G}_m]\to (\X_1)_{\mathrm{red}}$ indexed by $\delta$ is just obtained by twisting the residual gerbe $[\Sp \mathbf{F}/\mathbf{G}_m]\hookrightarrow (\X_1)_{\mathrm{red}}$ associated to $\delta$ by unramified characters. In particular, for $\delta=1$, we recover the map $[\mathbf{G}_m/\mathbf{G}_m]\hookrightarrow (\X_1)_{\mathrm{red}}$ induced by the universal unramified character $\ur{x}$.  

By Lemma \eqref{closed immersion crucial}, this last map is a closed immersion. After twisting by $\delta$, we see that the same is true of the map $[\mathbf{G}_m/\mathbf{G}_m]\hookrightarrow (\X_1)_{\mathrm{red}}$ indexed by $\delta$. As any character $\delta: G_K\to \overline{\mathbf{F}}_p^\times$ is an unramified twist of exactly one of the $\delta$, it is now straightforward (see Lemma \eqref{reduced substack} below) to deduce that the map $\coprod_{\delta}[\mathbf{G}_m/\mathbf{G}_m]\hookrightarrow (\X_1)_{\mathrm{red}}$ is indeed an isomorphism, as desired. 
\end{proof}
\begin{lem}\label{reduced substack}
Let $\Z$ be a reduced algebraic stack locally of finite type over a field $k$. Let $\Z_1,\ldots,\Z_n$ be a family of closed algebraic substacks of $\Z$ with the property that every $\bar{k}$-point of $\Z$ factors through exactly one of the $\Z_i$. Then the natural map $\coprod_i \Z_i\to \Z$ is an isomorphism.
\end{lem}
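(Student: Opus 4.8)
The plan is to show that the stated hypotheses force the natural map $\coprod_i\Z_i\to\Z$ to be a closed immersion which is moreover an isomorphism; the two halves of the hypothesis (``exactly one'', and ``every'') get used for these two properties respectively.

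First I would check that the $\Z_i$ are pairwise disjoint: for $i\neq j$ the fibre product $\Z_i\times_{\Z}\Z_j$ is a closed substack of $\Z$, hence algebraic and locally of finite type over $k$, so if it were nonempty it would carry a point valued in a finite extension of $k$, and therefore --- after fixing an embedding of that extension into $\bar k$ --- a $\bar k$-point of $\Z$ factoring through both $\Z_i$ and $\Z_j$, contradicting the hypothesis that every $\bar k$-point of $\Z$ factors through \emph{exactly one} of the $\Z_i$. Granting disjointness, $\coprod_i\Z_i\to\Z$ is a monomorphism, since $\big(\coprod_i\Z_i\big)\times_{\Z}\big(\coprod_j\Z_j\big)=\coprod_{i,j}\Z_i\times_{\Z}\Z_j=\coprod_i\Z_i$ (the off-diagonal terms vanishing by disjointness, and $\Z_i\times_{\Z}\Z_i\cong\Z_i$ because $\Z_i\hookrightarrow\Z$ is a monomorphism); it is also proper, being a finite coproduct of closed immersions; and a proper monomorphism of algebraic stacks is a closed immersion. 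Hence $\coprod_i\Z_i\hookrightarrow\Z$ is a closed immersion.

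It remains to see this closed immersion is an isomorphism, i.e.\ that the quasi-coherent ideal sheaf it cuts out is zero, which can be checked on a smooth atlas $\Sp R\to\Z$; since $\Z$ is reduced we may take $R$ reduced and of finite type over $k$. There $\coprod_i\Z_i$ pulls back to a closed subscheme $V(J)\subseteq\Sp R$, and the hypothesis that every $\bar k$-point of $\Z$ factors through some $\Z_i$ translates, after lifting $\bar k$-points along the smooth atlas, into the statement that every $k$-algebra homomorphism $R\to\bar k$ kills $J$. As $R$ is of finite type over $k$ the kernels of such homomorphisms are precisely the maximal ideals of $R$, so $J$ lies in every maximal ideal, whence $J\subseteq\mathrm{nil}(R)=0$ because $R$ is Jacobson and reduced. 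Thus $V(J)=\Sp R$ and $\coprod_i\Z_i\xrightarrow{\sim}\Z$.

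I do not expect a genuine obstacle anywhere; the argument only invokes the standard fact that a proper monomorphism of algebraic stacks is a closed immersion, plus the routine bookkeeping needed to lift $\bar k$-points along the smooth atlas and to reformulate the hypothesis ring-theoretically. That last translation step is the one I would take the most care over.
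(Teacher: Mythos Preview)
Your argument is correct. Both proofs use the ``exactly one'' clause to get disjointness of the $\Z_i$ and the ``every'' clause together with reducedness to get surjectivity, but the packaging differs. The paper first establishes the topological decomposition $|\Z|=\coprod_i|\Z_i|$, promotes each $|\Z_i|$ to an \emph{open} substack $\U_i$, and then argues that the closed immersion $\Z_i\hookrightarrow\U_i$ into a reduced target which is a homeomorphism on underlying spaces must be an isomorphism. You instead bypass the topological step by invoking the criterion ``proper monomorphism $\Rightarrow$ closed immersion'' to see at once that $\coprod_i\Z_i\hookrightarrow\Z$ is a closed immersion, and then kill its ideal sheaf on a smooth atlas via the Jacobson property. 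Your route is a little more categorical and avoids introducing the auxiliary opens $\U_i$; the paper's route is a little more hands-on but avoids appealing to the proper-monomorphism criterion for stacks. Either way the substance is the same. The one place to be careful, as you note, is that $\Z$ need not admit a single affine atlas, so your final paragraph should be read as a local check on affine opens of a smooth cover; this is harmless since vanishing of the ideal sheaf is a local condition.
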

\begin{proof}
As usual, we denote by $|\Z|$ the underlying topological space of $\Z$, and similarly for $|\Z_i|$. We first show that $|\Z|=\coprod_i|\Z_i|$ set-theoretically. Let $\Z'$ be the scheme-theoretic image of the map $\coprod_i \Z_i\to \Z$. Then $\Z'$ is a closed algebraic substack of $\Z$ with $\Z'(\bar{k})=\Z(\bar{k})$. Since $\Z$ is reduced, this forces $\Z'=\Z$ (this can be checked after passing to a smooth cover of $\Z$ by a reduced scheme, where the result is standard). In particular, we have $|\Z|=|\Z'|$. As $|\Z'|$ is the closure of $\bigcup_i |\Z_i|$ in $|\Z|$ (cf. \cite[\href{https://stacks.math.columbia.edu/tag/0CML}{Tag 0CML}]{Sta21}), and each $|\Z_i|$ is a closed subset of $|\Z|$, we see that $|\Z|=\bigcup_i |\Z_i|$. Now for each $i\ne j$, $\Z_i\times_{\Z}\Z_j$ is an algebraic stack locally of finite type over $k$ with $(\Z_i\times_{\Z}\Z_j)(\bar{k})=\emptyset$ by our assumption. This forces $|\Z_i|\cap |\Z_j|=|\Z_i\times_{\Z}\Z_j|=\emptyset$. Thus we have a disjoint decomposition $|\Z|=\coprod_i |\Z_i|$, and hence each $|\Z_i|$ is also an open subset of $|\Z|$. Let $\U_i$ be the unique \textit{open} substack of $\Z$ with underlying space $|\U_i|=|\Z_i|$ (cf. \cite[\href{https://stacks.math.columbia.edu/tag/06FJ}{Tag 06FJ}]{Sta21}). In particular, we have a decomposition $\Z=\coprod_i \U_i$ into open substacks. Now for each $i$, the map $\Z_i\hookrightarrow \Z$ necessarily factors through a closed immersion $\Z_i\hookrightarrow \U_i$. Since $\U_i$ is reduced (being an open substack of $\Z$) and $|\Z_i|\xrightarrow{\sim}|\U_i|$ by construction, this closed immersion is necessarily an isomorphism.   
\end{proof}
\begin{remark}\label{independent of frob}
Assume $f: \xan\to \X_1$ is a morphism of stacks over $\Spf \O$ with the property that $f(\delta)\cong \aka(\delta)$ for all characters $\delta$ valued in a finite Artinian $\O$-algebra. We claim that in fact $f(\delta)\cong \aka(\delta)$ everywhere, or equivalently, that the map $g: \xan\to \X_1, \delta\mapsto f(\delta)\aka(\delta)^{-1}$ satisfies $g(\delta)\cong \aka$ for any $\delta$. Indeed, by \cite[Lem. 7.1.14]{EG22}, our assumption on $f$ implies that $g$ factors through the closed immersion $[\Spf \O/\widehat{\mathbf{G}}_m]\hookrightarrow \X_1$ (induced by the map $\Spf \O\hookrightarrow \xan$ classifying trivial characters). It therefore suffices to show that any map $\xan\to [\Spf \O/\widehat{\mathbf{G}}_m]$ is ``trivial'', i.e. that any line bundle on the Noetherian affine formal scheme $\xan$ is trivial. For this, it suffices to check the same result for the underlying reduced subscheme $(\xan)_{\mathrm{red}}$. But we have seen that $(\xan)_{\mathrm{red}}$ is just a disjoint union of finitely many copies of $\mathbf{G}_{m,\mathbf{F}}=\Sp \mathbf{F}[x,x^{-1}]$, and hence (as $\mathbf{F}[x,x^{-1}]$ is a PID), we have the claimed result. Thus we see that there is a unique functorial way to extend the construction $\delta\mapsto \aka(\delta)$ appearing in Dee's equivalence \eqref{Dee} from Artinian coefficients to all $\varpi$-adically complete $\O$-algebras $A$.
\end{remark}
\subsubsection{Proof of Lemma \texorpdfstring{\eqref{closed immersion crucial}}{3.6}}\label{proof of crucial}
\begin{proof}[First proof of Lemma \texorpdfstring{\eqref{closed immersion crucial}}{3.6}]
As in Remark \eqref{monomorphism}, the map $\ur{x}: \widehat{\mathbf{G}}_m\to \X_1$ induces a monomorphism 
\begin{displaymath}
    \ur{x}: [\widehat{\mathbf{G}}_m/\widehat{\mathbf{G}}_m]\hookrightarrow \X_1.
\end{displaymath}
(More formally, this map is given by composing the monomorphism $[\xan/\widehat{\mathbf{G}}_m]\hookrightarrow \X_1$ with the closed immersion $[\widehat{\mathbf{G}}_m/\widehat{\mathbf{G}}_m]\hookrightarrow [\xan/\widehat{\mathbf{G}}_m]$ induced by the closed immersion $\widehat{\mathbf{G}}_m\hookrightarrow \xan$ classifying unramified characters.) We want to show that this is in fact a closed immersion. As proper monomorphisms are closed immersions, it suffices to show that $\ur{x}$ is (representable by algebraic spaces and) proper. Note also that it suffices to work over $\O/\varpi^a$ for some $a\geq 1$ (for our proof of Theorem \eqref{main prop} it suffices to take $a=1$, but this will not simplify the argument). 

Let $\C_{1,0}$ be the stack of rank 1 projective Breuil--Kisin modules over $\fkS_A$ of \textit{height at most $0$} (in the terminology of \cite{EG22}). Concretely, objects of $\C_{1,0}$ are rank 1 projective $\fkS_A$-modules $\fM$ equipped with an \textit{isomorphism} $\varphi^*\fM\simeq \fM$; here $\fkS_A$ denotes the ring $(W(k)\otimes_{\mathbf{Z}_p}A)[[t]]$, equipped with the $A$-linear Frobenius $\varphi$ taking $t\mapsto t^p$ (and restricting to the natural Frobenius on $W(k)$). Let $\R_1$ be the the corresponding stack of rank 1 projective \' etale $\varphi$-modules over $\O_{\E,A}$, where $\O_{\E,A}:=\fkS_A[1/t]$. Our strategy is to relate $\ur{x}$ to the natural map $\C_{1,0}\to \R_1, \fM\mapsto \fM[1/t]$, which is known to be proper. Intuitively, as the source of $\ur{x}$ classifies unramified characters, and as crystalline representations (e.g. unramified characters) are of finite height, it is natural to guess that the composition $[\mathbf{G}_m/\mathbf{G}_m]\xrightarrow{\ur{x}}\X_1\to \R_1$ factors through the map $\C_{1,0}\to \R_1$. Here $\X_1\to \R_1$ is the natural map given by ``restriction to $G_{K_{\infty}}\subseteq G_K$'', where $K_\infty:=K(\pi^{1/p^{\infty}})$ for a compatible system $\pi^{1/p^\infty}$ of $p$-power roots of some fixed uniformizer $\pi$ of $K$ (see \cite[\textsection 3.7]{EG22}). We claim that this is indeed the case. More precisely, we will show that there is an isomorphism $[\mathbf{G}_m/\mathbf{G}_m]\simeq \C_{1,0}$ making the diagram 
\begin{equation}\label{commute}
\begin{tikzcd}
{[\mathbf{G}_m/\mathbf{G}_m]}\ar[d,"\ur{x}",swap]\ar[r,"\simeq"] & \C_{1,0}\ar[d]\\
\X_1\ar[r] & \R_1
\end{tikzcd}
\end{equation}
commute. Indeed, as finite projective modules over $\fkS_A$ are Zariski locally free on $\Sp(A)$ (see e.g. \cite[Prop. 5.1.9]{EG19}), we see that 
\begin{displaymath}
\C_{1,0}\simeq [LG^+/_{\varphi}]
\end{displaymath} 
where $LG^+$ denotes the functor $A\mapsto\fkS_A^\times$ and the quotient $/_{\varphi}$ is via the action of $LG^+(A)$ on itself by $\varphi$-twisted conjugation: $g\cdot M:=gM\varphi(g)^{-1}$. We have $\fkS_A=(W(k)\otimes_{\mathbf{Z}_p}A)[[t]]\simeq \prod_{0\leq j\leq f-1}A[[t]], x\otimes a\mapsto (\varphi^j(x)a)_j$. Under this identification, the action of $\varphi$ on $\fkS_A$ is given by $\varphi: (h_j)_j\mapsto (\varphi(h_{j+1}))_j$; here we also denote by $\varphi$ the $A$-linear action on $A[[t]]$ taking $t\mapsto t^p$. It is then easy to see that the map $\prod_j A[[t]]^\times \to A[[t]]^\times, (h_j)_j\mapsto \prod_j \varphi^j(h_j)$ induces an equivalence of groupoids
\begin{align*}
LG^+(A)/_{\varphi} &\simeq A[[t]]^\times/_{\varphi^f}.
\end{align*}
Now since $A[[t]]^\times=A^\times \times (1+t A[[t]])$ and elements in $1+tA[[t]]$ are ``killed'' in the quotient $/_{\varphi^f}$ (given any $h\in 1+tA[[t]]$, the series $g:=\prod_{n\geq 0} \varphi^{fn}(h)\in 1+tA[[t]]$ is well-defined and satisfies $h=g/\varphi^f(g)$), we obtain $A[[t]]^\times/_{\varphi^f}\simeq A^\times/A^\times$ with $A^\times$ acting trivially on itself, whence $\C_{1,0}\simeq [\mathbf{G}_m/\mathbf{G}_m]$. Unwinding definitions, one checks that a quasi-inverse is given by $a\in A^\times \mapsto D_{k,a}\otimes_{W(k)\otimes_{\mathbf{Z}_p}A}\fkS_A$, where $D_{k,a}$ is the rank one $\varphi$-module from Lemma \eqref{universal unramifed char lem}. This description also implies that the diagram \eqref{commute} commutes (again by unwinding definitions), as claimed.

In conclusion, we deduce that the composition $[\mathbf{G}_m/\mathbf{G}_m]\xrightarrow{\mathrm{ur}_x}\X_1\to \R_1$ is proper by \cite[Thm. 5.4.11 (1)]{EG19}\footnote{In this context (of usual Breuil--Kisin modules), a similar properness is first proved in \cite[Cor. 2.6]{PR09}. Note however that the definition of \' etale $\varphi$-modules in \cite{EG19} (and \cite{EG22}) is slightly less restrictive than that in \cite{PR09}: namely, in \textit{loc. cit.}, the authors demand furthermore that such a module $M$ is free \textit{fpqc} locally on $\Sp(A)$. In the rank 1 case, it follows \textit{a posteriori} from the explicit description in Corollary \eqref{description for etale phi-modules} below that this local freeness is a consequence of $M$ being projective (and even holds Zariski locally), and hence the two definitions yield the same stack; in general we do not know whether or not this is true.}. As the diagonal of the second map is a closed immersion by \cite[Prop. 3.7.4]{EG22}, the usual graph argument shows that the first map is proper, as wanted.
\end{proof}
The following proof was actually our first approach; it is somewhat more complicated than the previous proof in that it makes use of the existence of a certain crystalline substack of $\X_1$.
\begin{proof}[Second proof of Lemma \texorpdfstring{\eqref{closed immersion crucial}}{3.6}]
We want to show that the monomorphism $\ur{x}: [\widehat{\mathbf{G}}_m/\widehat{\mathbf{G}}_m]\hookrightarrow \X_1$ is a closed immersion. First recall that by \cite[Thm. 4.8.12]{EG22}, $\X_1$ admits a closed $\O$-flat $p$-adic formal algebraic substack $\X_1^{\mathrm{ur}}$, which is uniquely characterized by the property that, for any finite flat $\O$-algebra $\Lambda$, $\X_1^{\mathrm{ur}}(\Lambda)$ is the subgroupoid of $\X_1(\Lambda)$ consisting of characters $G_K\to \Lambda^\times$ which are (after inverting $p$) crystalline of Hodge--Tate weights $0$, or equivalently, unramified characters $G_K\to \Lambda^\times$. (Note that we are free to enlarge the field of coefficients $E$; in particular we may assume that it contains the Galois closure of $K$ so that the running assumption of \cite[Thm. 4.8.12]{EG22} is satisfied.) We will show that the map $\ur{x}$ factors through an isomorphism $[\widehat{\mathbf{G}}_m/\widehat{\mathbf{G}}_m]\simeq \X_1^{\mathrm{ur}}$, proving the lemma. Before continuing, we note that here it will be crucial to work directly over $\O$ (as opposed to the previous proof where we work modulo $\varpi^a$ for some $a\geq 1$).

We begin by showing that $\ur{x}$ factors through the closed substack $\X_1^{\mathrm{ur}}$, or equivalently, that the closed immersion 
\begin{displaymath}
    \X_1^{\mathrm{ur}}\times_{\X_1}[\widehat{\mathbf{G}}_m/\widehat{\mathbf{G}}_m]\hookrightarrow [\widehat{\mathbf{G}}_m/\widehat{\mathbf{G}}_m]
\end{displaymath}
is an isomorphism. As the target is a $p$-adic formal algebraic stack of finite type and flat over $\Spf \O$ (since it admits a smooth cover by the $p$-adic formal algebraic space $\widehat{\mathbf{G}}_m$, which is of finite type and flat over $\Spf \O$), it follows from \cite[Lem. 7.2.6 (3)]{LHLMlocal} that it suffices to show that for any morphism $\Spf \Lambda\to [\widehat{\mathbf{G}}_m/\widehat{\mathbf{G}}_m]$ whose source is a finite flat $\O$-algebra (endowed with the $p$-adic topology), the composite $\Spf \Lambda\to [\widehat{\mathbf{G}}_m/\widehat{\mathbf{G}}_m]\hookrightarrow \X_1$ factors through $\X_1^{\mathrm{ur}}$. This is clear as $[\widehat{\mathbf{G}}_m/\widehat{\mathbf{G}}_m](\Lambda)$ is also equivalent (via the monomorphism $[\widehat{\mathbf{G}}_m/\widehat{\mathbf{G}}_m]\hookrightarrow \X_1$) to the subgroupoid of $\X_1(\Lambda)$ consisting of unramified characters $G_K\to \Lambda^\times$. 

It remains to show that the induced monomorphism 
\begin{displaymath}
    [\widehat{\mathbf{G}}_m/\widehat{\mathbf{G}}_m]\hookrightarrow \X_1^{\mathrm{ur}}.
\end{displaymath}
is in fact an isomorphism. As the source and target are both $p$-adic formal algebraic stacks which are of finite type and flat over $\Spf \O$, and moreover $\X_1^{\mathrm{ur}}$ is analytically unramified by Lemma \eqref{x1 ur unramified} below, it suffices, by \cite[Lem. 7.2.6 (1)]{LHLMlocal}, to check that the above map induces an isomorphism on any finite flat $\O$-algebra $\Lambda$. This follows again from the fact that both sides admit the same moduli description (namely, as unramified characters) on these points. 
\end{proof}
The following lemma is presumably standard, but for lack of a reference, we include a proof here. 
\begin{lem}\label{unramified reduced}
Let $\X$ be a $p$-adic formal algebraic stack locally of finite type over $\Spf \O$. If $\X$ admits reduced Noetherian versal rings at all finite type points, then $\X$ is (residually Jacobson and) analytically unramified in the sense of \cite[Rmk. 8.23]{Eme}. 
\end{lem}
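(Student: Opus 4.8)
The plan is to unwind the two pieces of the conclusion separately. First I would recall what "residually Jacobson" means in the sense of \cite[Rmk. 8.23]{Eme}: every finite type point is a specialization of... no, rather, the topological space underlying $\X$ (or its versal rings) should behave like a Jacobson scheme, so that closed points are dense in every closed subset. The hypothesis that $\X$ is a $p$-adic formal algebraic stack locally of finite type over $\Spf\O$ already gives this: étale-locally $\X$ looks like $\Spf$ of a quotient of $\O\langle x_1,\dots,x_n\rangle[[y_1,\dots,y_m]]$, and such rings are visibly Jacobson after killing $\varpi$ (one reduces to $\mathbf{F}$-algebras of finite type, which are Jacobson by the classical Nullstellensatz); the formal directions $y_j$ are topologically nilpotent and so do not affect the underlying reduced structure. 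So the "residually Jacobson" clause is essentially automatic from "locally of finite type" and I would just cite the relevant structural description in \cite{Eme} or \cite{EG20}.

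The substantive content is the "analytically unramified" clause, and the strategy is to translate it into the stated hypothesis about versal rings. By definition (following \cite[Rmk. 8.23]{Eme}), $\X$ being analytically unramified means that the complete local ring — i.e. the versal ring — at each finite type point is a reduced ring. So the plan is: let $x$ be a finite type point of $\X$, with residue field a finite extension of $\mathbf{F}$ (this uses the Jacobson/finite-type property just established, so that finite type points have finite residue fields), and let $R_x$ be a versal ring at $x$. The hypothesis says precisely that $R_x$ may be chosen reduced and Noetherian. Then I must check that this choice of versal ring being reduced is exactly the condition "analytically unramified" as Emerton phrases it. I expect this to be a matter of matching definitions: Emerton's notion is formulated in terms of the completions of the stack at its finite type points, which are represented by (the formal spectra of) the versal rings, and "unramified" there should mean "reduced" — possibly with a subtlety that one wants the reducedness to be independent of the choice of versal ring, which holds because any two versal rings at $x$ become isomorphic after adding power-series variables, and adding variables over a reduced Noetherian ring preserves reducedness (as $R[[t]]$ is reduced when $R$ is reduced Noetherian).

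The one step I would flag as the main obstacle — or at least the one requiring care rather than routine — is exactly this definitional reconciliation: confirming that "analytically unramified in the sense of \cite[Rmk. 8.23]{Eme}" is literally equivalent to "all versal rings at finite type points are reduced," as opposed to some a priori stronger condition (e.g. involving the structure sheaf on a formal neighborhood, or a condition on all complete local rings including at non-closed points). If Emerton's definition is stated for arbitrary points rather than just finite type ones, I would need the Jacobson property to reduce to finite type points, and then a limit/approximation argument to pass from the versal rings (which pro-represent the deformation functor) to the honest complete local rings of a presentation; this is where the Noetherian hypothesis on the versal rings is used. So the proof I would write is short: (1) cite the structure theory to get that $\X$ is residually Jacobson and its finite type points have finite residue fields; (2) recall Emerton's definition of analytically unramified and observe it asks for reducedness of the versal/complete local rings at finite type points; (3) invoke the hypothesis, together with the remark that adjoining power series variables to a reduced Noetherian ring stays reduced, to conclude. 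I do not expect any hard estimate or construction — the work is entirely in pinning down the definitions precisely.
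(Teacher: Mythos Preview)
Your plan correctly isolates the two clauses and is right that ``residually Jacobson'' is essentially automatic from the finite-type hypothesis. The gap is in step (2): you are hoping that Emerton's definition of \emph{analytically unramified} for a formal algebraic stack is literally ``the versal rings at finite type points are reduced,'' so that the hypothesis gives the conclusion by definition-matching. It is not. The definition in \cite[Rmk.~8.23]{Eme} is phrased in terms of smooth covers: $\X$ is analytically unramified if it admits a smooth surjection from a disjoint union of analytically unramified affine formal algebraic spaces $\Spf B_i$. The versal rings live at the complete-local level, not at the level of the $B_i$ themselves, so there is genuine content in passing from one to the other. Your closing remark that ``I do not expect any hard estimate or construction'' is where the proposal goes wrong.

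The paper bridges this as follows. First form the associated reduced substack $\X'\hookrightarrow\X$; by construction its pullback along any smooth cover $\Spf B_i\to\X$ is $\Spf(B_i)_{\mathrm{red}}$, and these are analytically unramified (e.g.\ by \cite[Cor.~8.25]{Eme}), so $\X'$ is analytically unramified by the covering definition. The real work is then to show $\X'=\X$. This is checked on Artinian points: any $x:\Sp A\to\X$ factors through the versal morphism $\Spf B\to\X$ with $B$ reduced Noetherian, so it suffices to show $\Spf B\to\X$ factors through $\X'$. After base change to $\Spf B_i$ and a further smooth cover $\Spf C$, one must show $C$ is reduced; this follows because $\Spf C\to\Spf B$ is smooth and $\Spf B$ is analytically unramified (being complete local Noetherian and reduced), invoking \cite[Lem.~8.20]{Eme}. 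This last step---smooth over analytically unramified implies analytically unramified---is the mechanism that propagates reducedness from the versal ring back up to the cover, and it is precisely what your outline does not supply.
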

\begin{proof}
Let $\X'$ be the associated reduced formal algebraic substack of $\X$, as defined in \cite[Ex. 9.10]{Eme}. We claim that $\X'$ is analytically unramified. Choose a smooth (in particular, representable by algebraic spaces) surjection $\coprod_i \Spf B_i\to \X$ where each $B_i$ is a $p$-adically complete $\O$-algebra. By construction of $\X'$, we know that the base change $\X'\times_{\X}\Spf B_i$ is identified with $\Spf (B_i)_{\mathrm{red}}$ (see \cite[Ex. 9.10]{Eme}), and furthermore that each $\Spf (B_i)_{\mathrm{red}}$ is analytically unramified (e.g. by \cite[Cor. 8.25]{Eme}, applied to the finite type adic map $\Spf (B_i)_{\mathrm{red}}\to \Spf \O$). As $\X'$ receives a smooth surjection from the disjoint union $\coprod_i\Spf (B_i)_{\mathrm{red}}$ of analytically unramified affine formal algebraic spaces, it is analytically unramified by definition. 

We now show that the closed immersion $\X'\hookrightarrow \X$ is in fact an isomorphism (this will imply that $\X$ is analytically unramified, as desired). As this can be checked at the level of Artinian points (e.g. by \cite[Lem. 7.1.14]{EG22}), it in turn suffices to work with versal rings. More precisely, let $x\in \X(A)$ be a point valued in a finite Artinian local $\O$-algebra $A$. By assumption, $\X$ admits a reduced Noetherian versal ring $\Spf B$ at the finite type point induced by $x$. By versality, the map $x: \Sp A\to \X$ factors through $\Spf B\to \X$, so it suffices to show that the latter map factors through $\X'$. Choose a smooth cover $\coprod_i \Spf B_i\to \X$ as before. It suffices to show that each base change $\Spf B\times_{\X}\Spf B_i\to \Spf B_i$ factors through $\Spf (B_i)_{\mathrm{red}}$, which in turn will follow once we show that given any smooth morphism $\Spf C\to \Spf B\times_{\X}\Spf B_i$, the composite $\Spf C\to \Spf B\times_{\X}\Spf B_i\to \Spf B_i$ factors through $\Spf (B_i)_{\mathrm{red}}$. As any ring map from $B_i$ to a reduced ring necessarily factors through $(B_i)_{\mathrm{red}}$, it suffices to show that $C$ is reduced. As $B$ is complete local Noetherian and reduced, it is analytically unramified by definition. The upshot is that we have a smooth morphism $\Spf C\to \Spf B$ whose target is analytically unramified (and residually Jacobson, as $(\Spf B)_{\mathrm{red}}=\Sp B/\m_B$ is just a point). It then follows from \cite[Lem. 8.20]{Eme} that the source $\Spf C$ is also analytically unramified, and in particular that $C$ is reduced, as required. 
\end{proof}
\begin{lem}\label{x1 ur unramified}
The stack $\X_1^{\mathrm{ur}}$ is analytically unramified.
\end{lem}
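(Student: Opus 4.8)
The plan is to apply Lemma \ref{unramified reduced} to $\X = \X_1^{\mathrm{ur}}$. By that lemma, it suffices to check two things: that $\X_1^{\mathrm{ur}}$ is a $p$-adic formal algebraic stack locally of finite type over $\Spf\O$, and that it admits reduced Noetherian versal rings at all finite type points. The first is immediate from \cite[Thm. 4.8.12]{EG20}, which constructs $\X_1^{\mathrm{ur}}$ as a closed $\O$-flat $p$-adic formal algebraic substack of the Noetherian formal algebraic stack $\X_1$; Noetherianity and $\O$-flatness are recorded there. So the real content is the statement about versal rings.

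For the versal rings, I would first reduce to a concrete computation. A finite type point of $\X_1^{\mathrm{ur}}$ corresponds to an unramified character $G_K \to \overline{\mathbf{F}}_p^\times$, i.e. (fixing a Frobenius) a choice of scalar $\bar a \in \overline{\mathbf{F}}_p^\times$, equivalently a finite type point of $\widehat{\mathbf{G}}_m$ lying over it via $\ur{x}$. Since $\ur{x}\colon \widehat{\mathbf{G}}_m \to \X_1^{\mathrm{ur}}$ is a smooth (indeed the relevant) cover — it is the composite of $\widehat{\mathbf{G}}_m \to [\widehat{\mathbf{G}}_m/\widehat{\mathbf{G}}_m]$, which is a $\widehat{\mathbf{G}}_m$-torsor hence smooth, with the isomorphism $[\widehat{\mathbf{G}}_m/\widehat{\mathbf{G}}_m] \xrightarrow{\sim} \X_1^{\mathrm{ur}}$ established in the proof of Lemma \ref{closed immersion crucial} — a versal ring for $\X_1^{\mathrm{ur}}$ at such a point is obtained by completing $\widehat{\mathbf{G}}_m = \Spf\O[x,x^{-1}]$ at the corresponding closed point of its special fibre. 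That completed local ring is (a power series ring over the Witt vectors of the residue field, hence) regular, in particular reduced and Noetherian. One should also note that $[\widehat{\mathbf{G}}_m/\widehat{\mathbf{G}}_m]$ carrying the trivial group action means the smooth cover $\widehat{\mathbf{G}}_m$ and the stack itself have "the same" versal rings, so no extra care about the stacky direction is needed.

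The one point requiring a little thought — and the place I would expect the mild friction — is making precise the passage from "completed local ring of the smooth cover $\widehat{\mathbf{G}}_m$" to "versal ring of the stack $\X_1^{\mathrm{ur}}$", i.e. checking that smoothness of the cover does indeed make the former a versal ring for the latter at the induced finite type point (this is standard deformation-theoretic bookkeeping, via the fact that a versal ring can be computed on any smooth cover; cf. the discussion around \cite[Def. 8.2 and Rmk. 8.23]{Eme}), and confirming the hypotheses of Lemma \ref{unramified reduced} are literally met. Once this is in place the proof concludes: $\X_1^{\mathrm{ur}}$ has reduced (even regular) Noetherian versal rings at all finite type points, so Lemma \ref{unramified reduced} gives that $\X_1^{\mathrm{ur}}$ is residually Jacobson and analytically unramified.
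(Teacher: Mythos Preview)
Your argument is circular. You invoke the isomorphism $[\widehat{\mathbf{G}}_m/\widehat{\mathbf{G}}_m]\xrightarrow{\sim}\X_1^{\mathrm{ur}}$ ``established in the proof of Lemma \ref{closed immersion crucial}'' in order to compute versal rings of $\X_1^{\mathrm{ur}}$. But if you look at that proof, the isomorphism is deduced by applying \cite[Lem. 7.2.6]{LHM}, and one of the hypotheses needed there is precisely that $\X_1^{\mathrm{ur}}$ is analytically unramified --- which is the statement you are trying to prove. In other words, Lemma \ref{closed immersion crucial} depends on Lemma \ref{x1 ur unramified}, not the other way around; without Lemma \ref{x1 ur unramified} already in hand, you do not yet know that $\ur{x}$ gives a smooth cover of $\X_1^{\mathrm{ur}}$, only a monomorphism $[\widehat{\mathbf{G}}_m/\widehat{\mathbf{G}}_m]\hookrightarrow\X_1^{\mathrm{ur}}$.

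The paper breaks the circularity by appealing to outside input for the versal rings: \cite[Prop. 4.8.10]{EG20} identifies the versal rings of $\X_1^{\mathrm{ur}}$ at finite type points with the appropriate (crystalline, Hodge--Tate weight $0$) framed deformation rings, and \cite[Thm. 3.3.8]{Kisin08} shows those rings are $\O$-flat and reduced. With that in hand, Lemma \ref{unramified reduced} applies directly. So your overall strategy of feeding reduced versal rings into Lemma \ref{unramified reduced} is correct; what is missing is an independent source for the reducedness of those versal rings, and this is exactly what the Kisin deformation-ring result supplies.
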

\begin{proof}
This follows from Lemma \eqref{unramified reduced}, \cite[Prop. 4.8.10]{EG22} and \cite[Thm. 3.3.8]{Kisin08}.
\end{proof}
\subsection{The case of \texorpdfstring{\' etale $\varphi$-modules}{etale phi-modules}}\label{etale phi description}
We end this note by briefly explaining how our method can also be used to give explicit descriptions of the stacks of rank one \' etale $\varphi$-modules (i.e. in the absence of a $\Gamma$-action), generalizing \cite[Prop. 7.2.11]{EG22} to a large class of coefficient rings. We will put ourselves in the context of Situation 2.2.15 in \cite{EG22}. Namely, fix a finite field $k\simeq \mathbf{F}_{p^f}$ and write $\mathbf{A}^+:=W(k)[[t]]$. Let $\mathbf{A}$ be the $p$-adic completion of $\mathbf{A}^+[1/t]$. Let $\varphi: \mathbf{A}\to \mathbf{A}$ be a ring map lifting the Frobenius modulo $p$ (we do not assume that $\varphi$ preserves the subring $\mathbf{A}^+$). We note that $\varphi$ necessarily induces the natural Frobenius on $W(k)$. If $A$ is a $p$-adically complete $\mathbf{Z}_p$-algebra, we write $\mathbf{A}_A^+:=(W(k)\otimes_{\mathbf{Z}_p}A)[[t]]$ and let $\mathbf{A}_A$ be the $p$-adic completion of $\mathbf{A}_A^+[1/t]$, equipped with the $A$-linear extension of $\varphi$. Let $\R_1$ denote the stack of rank 1 projective \' etale $\varphi$-modules over $\mathbf{A}_A$, as defined in \cite[\textsection 3.1]{EG22} (we continue to assume that the coefficient ring $\O$ is large enough so that $k\hookrightarrow\mathbf{F}$). By $W_{k((t))}$ (resp. $G_{k((t))}$), we will mean the Weil group (resp. the Galois group) of the local field $k((t))$.

\begin{prop}
    There is a natural isomorphism
    \begin{displaymath}
\left[\xan_{k((t))}/\widehat{\mathbf{G}}_m\right]\xrightarrow{\sim} \R_1;
    \end{displaymath}
here $\xan_{k((t))}$ denotes the functor on $p$-adically complete $\O$-algebras taking $A$ to the set of continuous characters $\delta: W_{k((t))}\to A^\times$, and in the formation of the quotient stack, the $\widehat{\mathbf{G}}_m$-action is taken to be
trivial.
\end{prop}
\begin{proof}
Since the proof is very similar to that of Theorem \eqref{main prop}, we will content ourselves with indicating the main steps.
\begin{itemize}
    \item Recall firstly that Dee's equivalence \eqref{Dee} also admits a variant for \' etale $\varphi$-modules: if $A$ is a finite Artinian $\O$-algebra, then there is a natural equivalence from the category of finite projective \' etale $\varphi$-modules over $\mathbf{A}_A$, to the category of finite free $A$-modules with a continuous action of $G_{k((t))}$ (see \cite[Thm. 2.1.27]{Dee01}).
    \item Construct a map $\xan_{k((t))}\to \R_1, \delta\mapsto \mathbf{A}_A(\delta)$ extending Dee's equivalence from Artinian coefficients to all $p$-adically complete $\O$-algebras $A$; the main point is again the construction of the universal unramified character, which can be done similarly as in Definition \eqref{universal unramified} (namely, given $a\in A^\times$, we define $\mathbf{A}_A(\ur{a}):=D_{k,a}\otimes_{W(k)\otimes_{\mathbf{Z}_p}A}\mathbf{A}_A$, where $D_{k,a}$ is the $\varphi$-module from Lemma \eqref{universal unramifed char lem}). 
    \item Show that the automorphisms of a rank one \' etale $\varphi$-modules are simply the scalars. This amounts to showing that $(\mathbf{A}_A)^{\varphi=1}=A$ for any $\O/\varpi^a$-algebra $A$. The case $a=1$ is clear as then $\varphi(t)=t^p$. The general case then follows easily from this by d\' evissage. 
    
    Combining with the previous item, one obtains a natural monomorphism 
    \begin{equation}\label{monomorphism etale phi}
        [\xan_{k((t))}/\widehat{\mathbf{G}}_m]\hookrightarrow \R_1,
    \end{equation}
    which we want to show is an isomorphism.
    
    \item Reduce to proving essential surjectivity of \eqref{monomorphism etale phi} for reduced test rings. The proof is similar to that of Lemma \eqref{reduced is enough}, except that we have to be slightly more careful as we do not know a priori if $\R_1$ is limit preserving in general (recall that we do not assume that $\mathbf{A}^+$ is $\varphi$-stable). To overcome this, the idea is to reduce first to the case $A$ is an $\mathbf{F}$-algebra by mimicking the argument there for the ideal $\varpi A$. Note that the set of isomorphism classes of objects in $\R_1(A)$ lifting the trivial object in $\R_1(A/I)$ ($I$ being a square-zero ideal) is now given by $\mathbf{A}_I/(\varphi-1)$, or more precisely, by $H^1$ of the complex $[\mathbf{A}_I\xrightarrow{\varphi-1}\mathbf{A}_I]$ (concretely, any such object admits an $\mathbf{A}_A$-basis $v$ so that $\varphi(v)=fv$ for some $f\in \ker(\mathbf{A}_A^\times\to \mathbf{A}_{A/I}^\times)=1+\mathbf{A}_I$; the corresponding cohomology class is then given by $[f-1]$). Again, here we cannot immediately reduce to the case where $I$ is a finite $\O$-module as in the proof of Lemma \eqref{reduced is enough}. Instead, we will show directly that if $\varpi I=0$ (which is the only case that we need), then $\mathbf{A}_I/(\varphi-1)=\varinjlim \mathbf{A}_{I_i}/(\varphi-1)$ where $\{I_i\}$ is the system of the finite sub-$\mathbf{F}$-modules of $I$. As $k\otimes_{\mathbf{F}_p}\mathbf{F}\simeq \prod\mathbf{F}$, we have $\mathbf{A}_I/(\varphi-1)\simeq I((t))/(\varphi^f-1)$ (here $\varphi(t)=t^p$ as $\varpi I=0$). As elements in $tI[[t]]$ are killed in the quotient (given $h\in tI[[t]]$, the series $g:=\sum_{n\geq 0} \varphi^{fn}(h)\in tI[[t]]$ is well-defined and satisfies $h=(1-\varphi^f)(g)$), it suffices to prove the analogous claim for $I[1/t]$, which is obvious (as we are working with polynomials, as opposed to (infinite) series). Finally, over $\mathbf{F}$, the subring $\mathbf{A}^+$ is $\varphi$-stable (as $\varphi(t)=t^p$), and we can invoke \cite[Thm. 5.4.11 (3)]{EG19} to deduce that $(\R_1)_{\mathbf{F}}$ is limit preserving, and hence reduce to the case $A$ is a finite type $\mathbf{F}$-algebra. We now run the same argument, but for the (nilpotent) ideal $A^{00}$.
    
    \item By the previous item, it suffices to show that the map \eqref{monomorphism etale phi} induces an isomorphism on underlying reduced substacks. Again as there are only finitely many mod $p$ characters of $G_{k((t))}$ up to unramified twist, we are reduced to showing that the map 
    \begin{equation}\label{closed immersion etale phi}
        [\mathbf{G}_m/\mathbf{G}_m]_{\mathbf{F}}\hookrightarrow (\R_1)_{\mathbf{F}}
    \end{equation}
    induced by the universal unramified character is a closed immersion (see Lemma \eqref{reduced substack}). Again, over $\mathbf{F}$, the subring $\mathbf{A}^+$ is $\varphi$-stable (this is automatic in our first proof of Lemma \eqref{closed immersion crucial}), and we can invoke \cite[Thm. 5.4.11 (1)]{EG19} to see that the natural map $\C_{1,0}\to \R_1$ is proper, where the source denotes the stack (over $\mathbf{F}$) of rank 1 projective $\varphi$-modules over $\mathbf{A}_A^+$ of height at most $0$.  The rest of the proof is now as before: namely, we show that the map \eqref{closed immersion etale phi} factors as $[\mathbf{G}_m/\mathbf{G}_m]\xrightarrow{\simeq} \C_{1,0}\to \R_1$ with the first map being given by $a\in A^\times\mapsto D_{k,a}\otimes_{W(k)\otimes_{\mathbf{Z}_p}A}\mathbf{A}_A^+$. 
\end{itemize}
\end{proof}
Specializing to the case $\mathbf{A}=\mathbf{A}_K$, and combing with the Fontaine--Wintenberger isomorphism $G_{K_{\mathrm{cyc}}}\simeq G_{k_{\infty}((t))}$ (which restricts to an isomorphism of Weil subgroups), we recover the following result.
\begin{cor}[{\cite[Prop. 7.2.11]{EG22}}]\label{description for etale phi-modules}
There is an isomorphism 
\begin{displaymath}
    \left[\left(\Spf \O[[I_{K_{\mathrm{cyc}}}^{\mathrm{ab}}]]\times \widehat{\mathbf{G}}_m\right)/\widehat{\mathbf{G}}_m\right]\xrightarrow{\sim} \R_{K,1}
\end{displaymath}
(again, in the formation of the quotient stack, the $\widehat{\mathbf{G}}_m$-action is taken to be
trivial).
\end{cor}
\medskip
\printbibliography[title ={References}]

@article {EG19,
    AUTHOR = {Emerton, Matthew and Gee, Toby},
     TITLE = {`{S}cheme-theoretic images' of morphisms of stacks},
   JOURNAL = {Algebr. Geom.},
  FJOURNAL = {Algebraic Geometry},
    VOLUME = {8},
      YEAR = {2021},
    NUMBER = {1},
     PAGES = {1--132},
}

@article {Dee01,
    AUTHOR = {Dee, Jonathan},
     TITLE = {{$\Phi$}-{$\Gamma$} modules for families of {G}alois
              representations},
   JOURNAL = {J. Algebra},
  FJOURNAL = {Journal of Algebra},
    VOLUME = {235},
      YEAR = {2001},
    NUMBER = {2},
     PAGES = {636--664},
      %ISSN = {0021-8693},
   MRCLASS = {12G05 (12F10)},
  MRNUMBER = {1805474},
MRREVIEWER = {Ramdorai Sujatha},
      % DOI = {10.1006/jabr.1999.8272},
       %URL = {https://doi.org/10.1006/jabr.1999.8272},
}

@article {Kisin08,
    AUTHOR = {Kisin, Mark},
     TITLE = {Potentially semi-stable deformation rings},
   JOURNAL = {J. Amer. Math. Soc.},
  FJOURNAL = {Journal of the American Mathematical Society},
    VOLUME = {21},
      YEAR = {2008},
    NUMBER = {2},
     PAGES = {513--546},
}

@article {PR09,
    AUTHOR = {Pappas, G. and Rapoport, M.},
     TITLE = {{$\Phi$}-modules and coefficient spaces},
   JOURNAL = {Mosc. Math. J.},
  FJOURNAL = {Moscow Mathematical Journal},
    VOLUME = {9},
      YEAR = {2009},
    NUMBER = {3},
     PAGES = {625--663, back matter},
}

@misc{Sta21,
  author       = {The {Stacks project authors}},
  title        = {The Stacks project},
  howpublished = {\url{https://stacks.math.columbia.edu}},
  year         = {2022},
}

@misc{Eme,
      author={Matthew Emerton},
      title={Formal algebraic stacks}, 
      %eprint={http://www.math.uchicago.edu{//}/~emerton/pdffiles/formal-stacks.pdf},
      %eprint ={http://www.math.uchicago.edu/~emerton},
      howpublished = {\url{http://www.math.uchicago.edu/~emerton}},
}

@incollection {Fon90,
    AUTHOR = {Fontaine, Jean-Marc},
     TITLE = {Repr\'{e}sentations {$p$}-adiques des corps locaux. {I}},
 BOOKTITLE = {The {G}rothendieck {F}estschrift, {V}ol. {II}},
    SERIES = {Progr. Math.},
    VOLUME = {87},
     PAGES = {249--309},
 PUBLISHER = {Birkh\"{a}user Boston, Boston, MA},
      YEAR = {1990},
   MRCLASS = {11S23 (14F30 14L05)},
  MRNUMBER = {1106901},
MRREVIEWER = {Rutger Noot},
}

@book {EG22,
    AUTHOR = {Emerton, Matthew and Gee, Toby},
     TITLE = {Moduli stacks of \'{E}tale ({$\phi, \Gamma$})-modules and the
              existence of crystalline lifts},
    SERIES = {Annals of Mathematics Studies},
    VOLUME = {215},
 PUBLISHER = {Princeton University Press, Princeton, NJ},
      YEAR = {2023},
     PAGES = {ix+298},
}

@article {LHLMlocal,
    AUTHOR = {Le, Daniel and Le Hung, Bao V. and Levin, Brandon and Morra,
              Stefano},
     TITLE = {Local models for {G}alois deformation rings and applications},
   JOURNAL = {Invent. Math.},
  FJOURNAL = {Inventiones Mathematicae},
    VOLUME = {231},
      YEAR = {2023},
    NUMBER = {3},
     PAGES = {1277--1488},
}
\addcontentsline{toc}{section}{References}
\textsc{\small LAGA, Universit\' e Paris 13, 99 avenue Jean-Baptiste Cl\' ement, 93430 Villetaneuse, France}\\
\indent\textit{Email address}: \href{mailto:dat.pham@math.univ-paris13.fr}{\texttt{dat.pham@math.univ-paris13.fr}}
\end{document}